\documentclass{article}
\usepackage[left=2.5cm,right=2.5cm,top=2cm]{geometry}
\usepackage[utf8]{inputenc}

\usepackage{amsfonts}
\usepackage{amsmath}
\usepackage{amsthm}
\usepackage{amssymb}
\usepackage{dsfont}
\usepackage{mathrsfs}
\usepackage{hyperref}

\usepackage{graphicx}
\usepackage{subcaption}
\usepackage{xcolor}

\newcommand{\N}{\mathbb {N}}

\newcommand{\R}{\mathbb {R}}

\newcommand{\tr}{{\rm tr}}

\newcommand{\cF}{\mathcal {F}}

\newcommand{\cP}{\mathcal {P}}
\newcommand{\cN}{\mathcal {N}}

\newcommand{\cA}{\mathcal {A}}
\newcommand{\cE}{\mathcal {E}}

\newcommand{\FLOOR}[1]{{{\left\lfloor#1\right\rfloor}}} 
 
\newcommand{\ABS}[1]{{{\left| #1 \right|}}}
\newcommand{\BRA}[1]{{{\left\{#1\right\}}}}
\newcommand{\SBRA}[1]{{{\left[#1\right]}}}
\newcommand{\ANG}[1]{{{\left\langle#1\right\rangle}}}

\newcommand{\PAR}[1]{{{\left(#1\right)}}}

\newcommand{\ACO}[1]{{{\left\{\begin{array}{ll}
				#1
			\end{array}\right.}}}

\newcommand{\IND}{\mathds{1}}

\newcommand{\lapl}{\Delta}
\newcommand{\p}{\mathbb{P}}
\newcommand{\e}{\mathbb{E}}

\newcommand{\Cov}{\text{Cov}}

\DeclareMathOperator{\erf}{erf}
\newcommand{\scrP}{\mathscr{P}}
\newcommand{\tmix}{t_{\text{mix}}}

\newtheorem{thm}{Theorem}
\newtheorem{lem}{Lemma}
\newtheorem{pro}{Proposition}

\newtheorem*{cor*}{Corollary}
\theoremstyle{definition}
\newtheorem*{defi}{Definition}

\newtheorem*{remark}{Remark}

\title{Mixing profile for Glauber dynamics of the discrete Gaussian Free Field starting from super-harmonic functions}
\author{Alexandre Bristiel}

\begin{document}
	\maketitle
	\begin{abstract}
		We study the convergence rate of the heat-bath Glauber dynamics for the Discrete Gaussian Free Field on arbitrary connected finite graphs. We show that, when starting from super-harmonic initial conditions, the evolution enjoys a strong form of monotonicity. This allows us to get a sharp mixing profile as the size of the graphs diverges. More precisely, we show that mixing occurs at time $\frac{1}{2\lambda}\log(\mathcal{E})$ with window $\mathcal{O}(1/\lambda)$, where $\lambda$ is the spectral gap of the graph Laplacian and $\mathcal{E}$ is the energy of the super-harmonic initial condition. This result holds for arbitrary graphs that do not exhibit extreme connectivity properties (one way or the other). In particular, it holds for finite boxes of the grid $\mathbb{Z}^d$, in dimension $d\geq 3$.
	\end{abstract}
	\section{Introduction}
	The \textit{continuum Gaussian Free Field} (\textbf{GFF}) is a random field that is a generalization of Brownian motion to multiple dimensions of ``time". It is a random distribution (in the sense of generalized functions) from $\R^d$ to $\R$, with a Gaussian structure.  The two dimensional free field is a universal and elementary way to define a canonical random ``function". This explains why this distribution has been used as the starting point for numerous physical models \cite{glimmQuantumPhysics1987}, first in Quantum Field Theory, then in Conformal Field Theory, and Liouville Quantum Gravity, see \cite{wernerLectureNotesGaussian2021} and \cite{berestyckiGaussianFreeField2025}. Note that in physics the GFF is generally known as the Euclidean mass-less free field.
	\begin{figure}[!h]
		\centering
		\begin{subfigure}{.5\textwidth}
			\centering
			\includegraphics[width=1.1\linewidth]{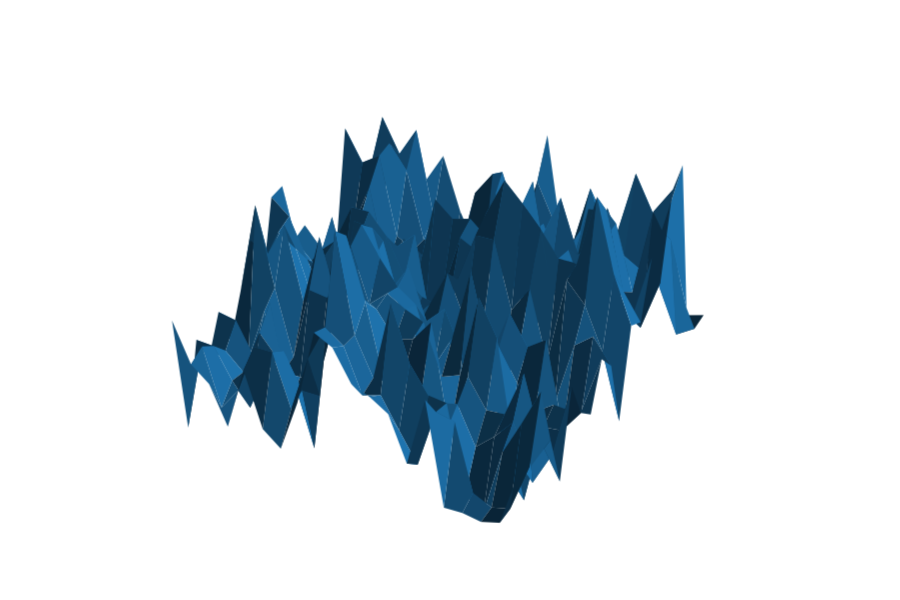}
			\caption{Continuum Gaussian Free Field.}
		\end{subfigure}%
		\begin{subfigure}{.5\textwidth}
			\centering
			\includegraphics[width=1.1\linewidth]{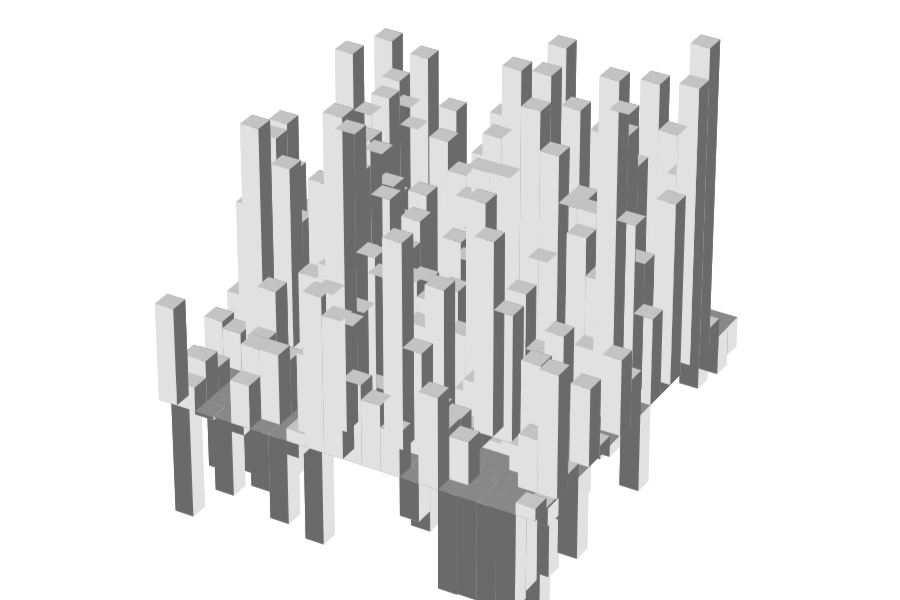}
			\caption{Discrete Gaussian Free Field.}
		\end{subfigure}
	\end{figure}
	\par{}The GFF has seen reinforced interest when a connection was made with a key random object, SLE$_4$ (\textit{Schramm-Loewner Evolution}). SLE has appeared as the scaling limit of a vast number of interface models and is the subject of intense research. It was shown in \cite{schrammContourLineContinuum2013} that one can define a zero ``level line" for the GFF and that the law of these curves is SLE$_4$. Other deep connections between both models have been found in \cite{dubedatSLEFREEFIELD2009} such as a coupling and identities for the partition function of SLE. Many more developments have been achieved since then, and the theory is evolving too quickly for a comprehensive overview.
	\par{}Just as Brownian motion arises from random walks, the GFF is expected to be the universal scaling limit of a large class of discrete random surfaces. 
	However, finding a clear description of which discrete model should converge to the GFF remains a challenge. Some important examples that have been proven to converge to the GFF include the equilibrium fluctuations of the Ginzburg–Landau $\nabla\varphi$ (GL) model \cite{millerFluctuationsGinzburgLandau2011}, the domino tiling model (dimer model) \cite{kenyonDominosGaussianFree2001} and the fluctuations of the Ginibre characteristic polynomials \cite{riderNoiseCircularLaw2007}. Understanding the dynamical evolution of the GFF is therefore of much interest, and the study of the discretization of this evolution is the focus of our paper. The canonical discretization of the GFF is the aptly named \textit{discrete Gaussian Free Field} (\textbf{DGFF}). \\
	\par{}The \textbf{DGFF} is a random real function on a discrete graph, whose correlations follow the graph structure. On finite graphs this is simply a multivariate Gaussian vector with covariance matrix the Green function of the graph. A classical fact in the study of stochastic partial differential equations is that the GFF is the stationary solution of the \textit{stochastic heat equation}:
	\[
	\frac{\partial h}{\partial t} = \lapl h + \eta,
	\]
	where $\eta$ is a space-time white noise (see for example \cite{walshIntroductionStochasticPartial1986a}). This equation induces a natural description for the dynamics of the GFF. The discretization of the stochastic heat equation describes in turn a continuous-time Markov process on the set of functions, which governs the dynamics of the DGFF. The updates done by this process changes the value of the function to the average of its neighbors plus a Gaussian noise. It turns out that this Markov process is in fact the Glaubber dynamics for the DGFF, a well know method for simulating complex high-dimensional distributions, stemming from the Ising model in \cite{glauberTimeDependentStatisticsIsing1963}.\\
	\par{}A natural question when studying such evolutions is the time they may require in order to approach equilibrium. In the context of discrete Markov chains this is known as the \textit{mixing time}. The mixing of various Glaubber dynamics has been the topic of much investigation dating from classical results for the Ising model such as \cite{aizenmanRapidConvergenceEquilibrium1987} and \cite{griffithsRelaxationTimesMetastable1966}, to the seminal works of  D. Aldous and P. Diaconis \cite{aldousShufflingCardsStopping1986, aldousRandomWalksFinite1983} and \cite{diaconisCutoffPhenomenonFinite1996}. These latter works revealed the existence of a phenomenon known as the \textit{cutoff phenomenon}, an abrupt transition from out of equilibrium to equilibrium. This discovery has led to countless developments in numerous models, although a definitive criterion for when cutoff should occur is still a major open question. \\
	\par{}This paper follows the recent developments in \cite{gangulyCutoffGlauberDynamics2023} which proved cutoff for the Glauber dynamics of the DGFF in two dimensions. We in turn consider the transient case, that is dimensions three and higher. We prove a stronger result, a full cutoff profile, but for a restricted class of initial conditions, namely super-harmonic functions. This result gives a clear understanding of the speed at which the chain mixes, provided we start with a super-harmonic initial condition. 
	
	\subsection{Discrete Gaussian free field}\label{subsec:dgff}
	Let $\overline{V} = V\cup\partial V$ be a finite state space. The set $\partial V$ is called the \textit{boundary} of $V$. Let $\overline{P} : \overline{V} \to \overline{V}$ be the transition matrix of a reversible Markov chain. We assume there are no in-place transitions, i.e. $\overline{P}$ has zero diagonal entries. Let $\overline{\lapl} = I_{\overline{V}}-P_{\overline{V}}$ be the Laplacian and $\overline{\pi}$ a invariant measure of the chain. $\overline{\pi}$ need not be a probability measure. \\
	We recall some key quantities for the space $\ell^2(\overline{\pi})$:
	\begin{itemize}
		\item For $f,g\in\ell^2(\overline{\pi})$ we denote $\ANG{\cdot,\cdot}_{\overline{\pi}}$ the \textit{scalar product}
		\[ \ANG{f,g}_{\overline{\pi}} = \sum_{x\in \overline{V}}f(x)g(x)\pi(x). \]
		\item The scalar product induced by $\overline{\lapl}$ is called the \textit{Dirichlet form} and denoted
		\[ \overline{\cE}(f,g) = \ANG{f,\lapl g}_{\overline{\pi}}. \] 
		\item The quantity $\overline{\cE}(f) := \overline{\cE}(f,f)$ is called the \textit{energy}.
	\end{itemize}
	Let $\lapl$ and $P$ be the restrictions of $\overline{\lapl}$ and $\overline{P}$ to $V$. Let $g = \lapl^{-1}$ be the \textit{Green's function}. Denote $\pi$ the conditional distribution of $\overline{\pi}$ on $V$. Let $\cE$ be the Dirichlet form induced by $\lapl$ in $\ell^2(\pi)$. Again, denote the energy $\cE(f) = \cE(f,f) = \ANG{f,\lapl f}_\pi$. Note that $\lapl, P$ and $g$ are symmetrical in $\ell^2(\pi)$ but not necessarily in the standard Euclidean space.
	\begin{defi}
		The zero-boundary \textbf{discrete Gaussian free field} (\textbf{DGFF}) $\Gamma$ is a measure on $\ell^2(\pi)$, such that
		\[ \forall f\in\ell^2(\pi),~~~\Gamma(f) \propto \exp\PAR{-\frac{1}{2}\cE(f)}. \]
		Note that $\Gamma$ is a multivariate Gaussian in the space $\ell^2(\pi)$, with mean 0 and covariance matrix $g$. The covariance matrix in the standard Euclidean space is given by $\PAR{\frac{g(x,y)}{\pi(y)}}_{x,y\in V}$.
	\end{defi}
	\begin{remark}
		The assumption of reversibility is not necessary since the distribution of $\Gamma$ depends only on the energy functional. The energy associated with $\lapl$ is the same as its symmetrized in $\ell^2(\pi)$, which is reversible.
	\end{remark}
	\begin{remark}
		The DGFF is a \textit{Gibbs measure} with Hamiltonian given by the energy $\cE$.
	\end{remark}
	\begin{remark}
		One can define the DGFF with a general boundary condition $f_{\partial V} : \partial V \to \R$, using the energy
		\[
			\tilde{\cE}(f) := \cE(f) + \sum_{x\in V, y\in \partial V}\lapl(x,y)f(x)f_{\partial V}(y)\pi(x).
		\]
		However, one can easily translate any result to this general case using the following fact. If $\gamma$ is this distributed as the DGFF with boundary condition $f_{\partial V}$ and $\overline{f_{\partial V}}$ is the harmonic extension of $f_{\partial V}$ to $V$, then $\gamma-\overline{f_{\partial V}} \sim \Gamma$.
	\end{remark}
	\subsection{Glauber dynamics}
	Our concern in this paper is the evolution of the DGFF under the \textbf{Glauber dynamics}. If $h_0 : V \to \R$ is any function, the Glauber dynamics $(h_t)_{t\geq 0}$ started from $h_0$ is a stochastic process on $\ell^2(\pi)$. \\
	The evolution can be described with the following:
	\begin{itemize}
		\item Put an exponential clock of parameter 1 at each site in $V$.
		\item If the clock at site $x$ rings at time $t$, the value of $h_t(x)$ is updated according to the law of $\Gamma$ conditioned on the remaining values, $\{h_t(y), y\neq x\}$.
	\end{itemize}
	Thanks to the Gaussianity, this conditional distribution has a simple description.
	\begin{lem}
		If $\gamma\sim \Gamma$ and $x\in V$, then the law of $\gamma(x)$ conditionally on $\{\gamma(y),y\neq x\}$ is that of a normal distribution with mean $P\gamma(x)$ and variance $1/\pi(x)$.
	\end{lem}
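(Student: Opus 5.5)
Since $\Gamma$ has density proportional to $\exp\PAR{-\frac12\cE(f)}$, the conditional density of $\gamma(x)$ given $\{\gamma(y),y\neq x\}$ is proportional to $\exp\PAR{-\frac12\cE(f)}$, viewed as a function of the single variable $u=f(x)$ with the other coordinates frozen. The plan is to expand $\cE(f)$ as a quadratic polynomial in $u$ and complete the square. This identifies the conditional law as Gaussian and gives its mean and variance.

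The first step is to write the energy explicitly:
\[
\cE(f)=\sum_{z\in V}f(z)\,(\lapl f)(z)\,\pi(z)=\sum_{z}\pi(z)f(z)^2-\sum_{z,w\in V}\pi(z)P(z,w)f(z)f(w).
\]
We then collect the terms involving $u=f(x)$. Because $\overline{P}$ has zero diagonal, $P(x,x)=0$, so the quadratic part in $u$ is exactly $\pi(x)u^2$.

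The linear part comes from the pairs $(z,w)=(x,w)$ and $(z,w)=(z,x)$. Together they give
\[
-u\sum_{w\neq x}\PAR{\pi(x)P(x,w)+\pi(w)P(w,x)}f(w).
\]
By reversibility, $\pi(w)P(w,x)=\pi(x)P(x,w)$; this holds for the restriction to $V$ because $\pi$ is just $\overline{\pi}$ up to normalization. The linear part therefore equals $-2\pi(x)\,u\,(Pf)(x)$, where $(Pf)(x)$ depends only on the frozen coordinates. In the non-reversible case one first replaces $\lapl$ by its $\ell^2(\pi)$-symmetrization, as the Remark allows, and the same computation goes through.

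Completing the square gives
\[
\cE(f)=\pi(x)\PAR{u-(Pf)(x)}^2+C,
\]
where $C$ does not depend on $u$. The conditional density is then proportional to $\exp\PAR{-\frac{\pi(x)}{2}(u-Pf(x))^2}$, which is the normal law with mean $P\gamma(x)$ and variance $1/\pi(x)$.

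The only delicate point is the cross-term bookkeeping: the two linear contributions must combine into $2\pi(x)Pf(x)$. This is exactly where reversibility (or symmetrization) and the zero-diagonal assumption enter. Apart from that, the argument is routine Gaussian conditioning, and it justifies computing the conditional law from the density in $\ell^2(\pi)$ coordinates.
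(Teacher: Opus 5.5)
Your proposal is correct and is essentially the paper's proof: both isolate the part of $\cE(f)$ that depends on $f(x)$, use the zero diagonal and reversibility of $P$ to merge the two cross terms into $-2\pi(x)f(x)Pf(x)$, and complete the square to get $\cE(f)=\pi(x)(f(x)-Pf(x))^2+C'$. One caveat concerns your non-reversible aside: after symmetrization the conditional mean becomes $\frac{1}{2}(P+P^*)\gamma(x)$, with $P^*$ the $\ell^2(\pi)$-adjoint, rather than $P\gamma(x)$; this is outside the lemma's reversible hypotheses and does not affect your argument.
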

	\begin{figure}[!h]
		\centering
		\begin{subfigure}{.45\textwidth}
			\centering
			\includegraphics[width=.6\linewidth]{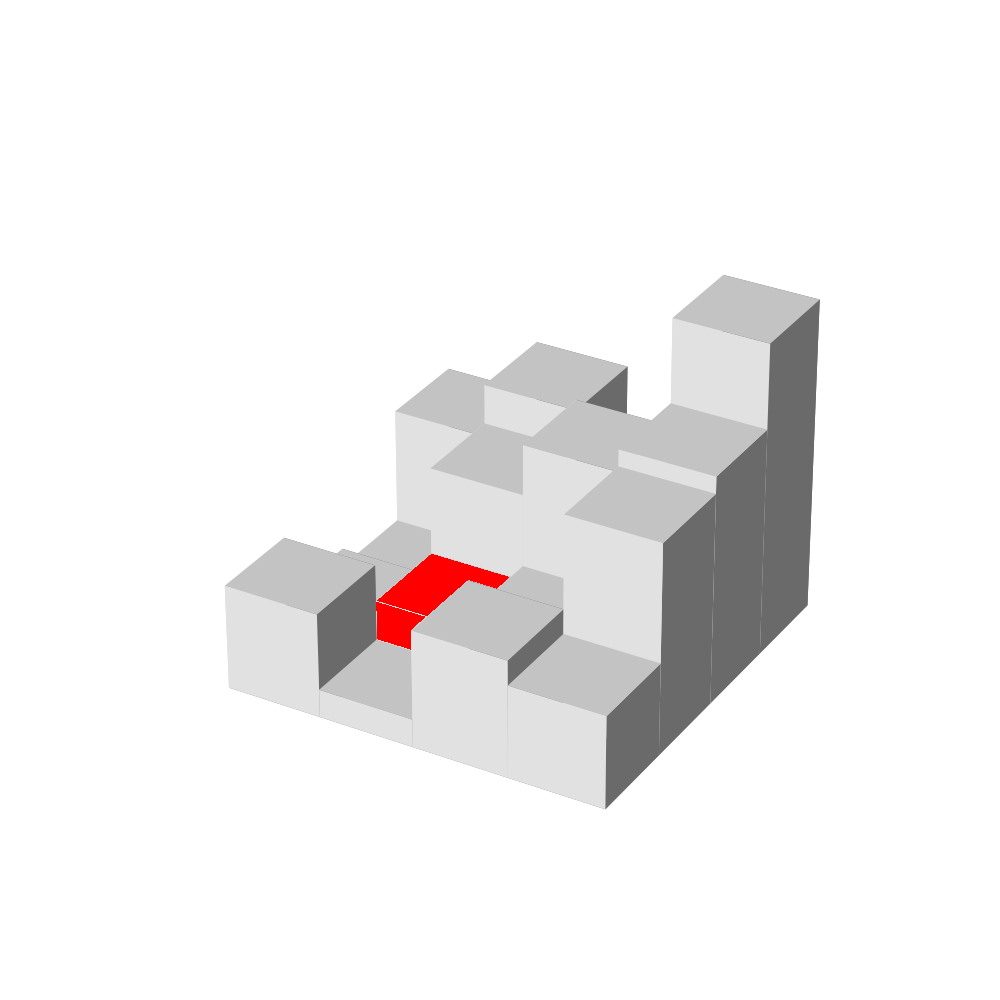}
			\caption{Before update.}
		\end{subfigure}%
		\begin{subfigure}{.45\textwidth}
			\centering
			\includegraphics[width=.6\linewidth]{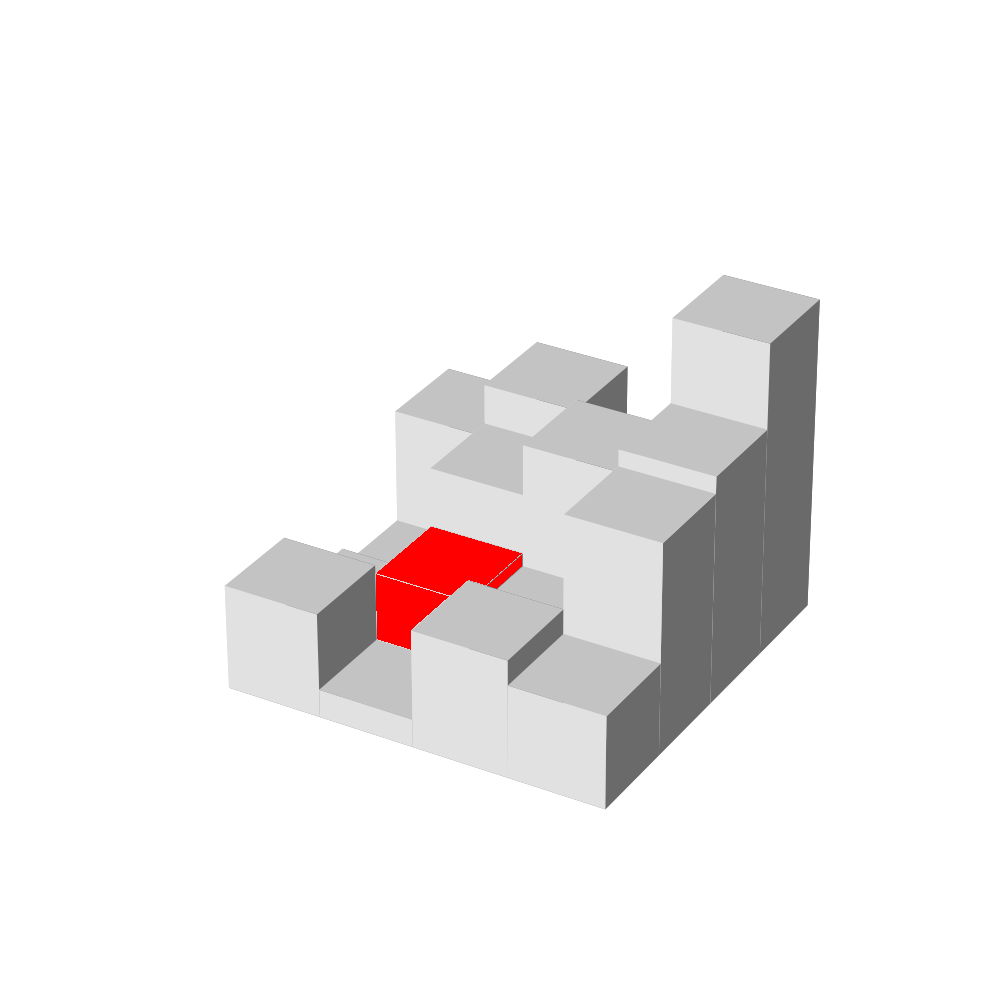}
			\caption{After update.}
		\end{subfigure}
		\caption{One site is updated to the average of its neighbors plus a Gaussian noise.}
	\end{figure}
	\begin{proof}
		We can decompose the energy using reversibility
		\begin{align*}
			\cE(f) &= \pi(x)f(x)(f(x)-Pf(x)) - \underbrace{\sum_{y\neq x}f(y)P(y,x)f(x)\pi(y)}_{=\pi(x)f(x)Pf(x)} + C\\
			&= \pi(x)(f(x) - Pf(x))^2 + C',
		\end{align*}
		where $C$ and $C'$ do not depend on the value of $f$ at site $x$.
	\end{proof}
	For any initial distribution $\mu$ on $\ell^2(\pi)$ we denote $\mu\scrP_t$ the law of the dynamics at time $t$, when initialized at $\mu$. By construction the process $(h_t)_{t\geq 0}$ is an irreducible Markov chain with invariant measure $\Gamma$ and $\delta_{h_0}\scrP_t$ will converge in law to $\Gamma$ regardless of the initial state $h_0$.
	\paragraph{Connection to the asynchronous DeGroot dynamics}If we consider only the evolution of the mean along the process, we see that at each update the value at a vertex gets replaced by the average of its neighbors. This exact process is known as the \textit{asynchronous DeGroot dynamics}. It was introduced in \cite{elboimAsynchronousDeGrootDynamics2024} as a variant of the classical DeGroot dynamics, a well-known model of non-Bayesian social learning. The only difference here is that the vertices in the boundary have fixed values.
	\begin{defi}[asynchronous DeGroot dynamics]
		Let $\overline{Q}$ be a stochastic matrix on $\overline{V}$ and $Q$ its restriction to $V$.
		The asynchronous DeGroot dynamics is a continuous time Markov chain $(\mu_t)_{t\geq 0}$ on the set of functions $V\to\R$. \\
		It has the following evolution:
		\begin{itemize}
			\item Put an exponential clock of parameter 1 at each site in $V$.
			\item If the clock at site $x$ rings at time $t$, the value of $\mu_t(x)$ is updated to $Q\mu_{t^{-}}(x)$.
		\end{itemize}
	\end{defi}
	We will describe in Subsection \ref{subsec:decomp} the exact relation between the asynchronous DeGroot dynamics and the DGFF dynamics. Our weak concentration-type result (Proposition \ref{prop:bound}) can also be interpreted as a bound on the consensus reaching time, in the context of social learning. 
	\subsection{Mixing time}
	In the following we will study the time-scale at which convergence towards equilibrium occurs. This quantity is known as the \textit{mixing time} of the chain. As is typical in the study of Markov chain mixing, we choose to measure the distance to equilibrium using the \textit{total variation distance}. 
	\begin{defi}
		The \textit{total variation distance} between two probability measures $\mu,\nu$ on the same measurable space $(\Omega,\cA)$ is given by
		\[ \|\mu-\nu\|_{TV} = \sup_{A\in\cA}|\mu(A)-\nu(A)|. \]
		An equivalent definition can be given using optimal transport duality:
		\[\|\mu-\nu\|_{TV} = \inf_{(X,Y) \text{ coupling of }\mu,\nu} \p(X\neq Y).\]
		If $\mu$ and $\nu$ both have a density with respect to a reference measure $\lambda$, then we also have
		\[
		\|\mu-\nu\|_{TV} = \frac{1}{2}\int |\mu(x)-\nu(x)|d\lambda(x).
		\]
	\end{defi}
	Since the state space is unbounded, the worst-case time for the process to converge is infinite. Thus, we need to consider the mixing time as a function of the size of the initial condition:
	\begin{defi}
		Let $\varepsilon>0$ be a precision and $r>0$ a chosen size. 
		The \textit{mixing time} at precision $\varepsilon$ from an initial condition of size $r$, denoted $\tmix^r(\varepsilon)$ is 
		\[
		\tmix^r(\varepsilon) = \max_{\substack{h : V\to\R\\ \cE(h) = r}}\inf\{t\geq 0|\|\delta_{h}\scrP_t-\Gamma\|_{TV} \leq \varepsilon\}.
		\]
		Note that $\sqrt{\cE(h)}$ is a norm on $\ell^2(\pi)$. The reason behind this choice of norm will become apparent in the proof.
	\end{defi}
 	Our objective with this paper is to show the occurrence of a phase transition for the mixing time of the chain, the \textit{cutoff phenomenon}. 

	\begin{defi}
		Let $(G_n)_{n\geq 0}$ be a sequence of connected and simple graphs with interior vertices $(V_n)_{n\geq0}$. For $n\in\N$, let $\Gamma_n$ be the DGFF on $G_n$. Assume the size of the graphs diverges. \\
		Let $r(n)$ be a real function of $n$. For $\varepsilon >0$, let $\tmix^{r,n}(\varepsilon)$ be the mixing time associated with $\Gamma_n$. We say that the dynamics of the DGFF started from an initial condition of size $r$, on the sequence of graphs $(G_n)_{n\in\N}$, exhibits a \textit{cutoff phenomenon}, if for any fixed $\varepsilon\in (0,1/2)$ we have
		\[
		\frac{\tmix^{r,n}(1-\varepsilon)}{\tmix^{r,n}(\varepsilon)} \xrightarrow{n\to\infty} 1.
		\]
		In our case the function $r$ should be chosen as a polynomial function of the size of the graph.
	\end{defi}

	\section{Main result}
	The principal obstacle to the study of this process is the complex correlations along the evolution. Thankfully we have found that when starting from a broad class of initial conditions, namely those which are \textit{super-harmonic}, we enjoy a stronger form of monotonicity at each step.
	\begin{defi}
		We say that a function $h: V\to\R$ is harmonic if $\lapl h$ is identically zero. If $\lapl h \geq 0$ (resp. $\lapl h\leq 0$) then we say that $h$ is super-harmonic (resp. $h$ is sub-harmonic). 
	\end{defi} 
	Our main result is the proof of cutoff for grids of dimension $d\geq 3$, when restricted to super-harmonic initial conditions. Note that positive constants are super-harmonic functions.
	\begin{thm}\label{thm:1}
		Let $d\geq 3$ and $n\in\N$. Let $\Lambda_n = [1,\dots, n]^d$ be the grid of length $n$ in dimension $d$. \\
		For any function $h_0 : \Lambda_n \to \R$, we denote $(h_t)_{t\geq 0}$ the Glauber dynamics for the DGFF on the grid $\Lambda_n$ with boundary equal to $\Lambda_{n+1}\setminus\Lambda_n$. By convention we take our invariant measure to be constant equal to 1 instead of re-normalizing.\\
		Let $r_n$ be any real function of $n$ such that  $ |\Lambda_n| \leq r_n \ll e^n$. Fix $s\in\R$ and set 
		\[
		t_{n} = \frac{n^2}{\pi^2}\log(r_n) + \frac{2n^2s}{\pi^2}.
		\]
		We have the following profile for the evolution started from super-harmonic functions
		\[
		\sup_{\substack{\cE(h_0) = r_n\\h_0\text{ super-harmonic}}}\|\delta_{h_0}\scrP_{t_n}-\Gamma\|_{TV} \xrightarrow{n\to\infty} \erf\PAR{\frac{e^{-s}}{2\sqrt{2}}},
		\]
		where $\erf$ is the error function 
		\[
		\erf(x) = \frac{1}{\sqrt{\pi}}\int_{-x}^xe^{-t^2}dt.
		\]
		In particular, we have cutoff at time $\frac{n^2}{\pi^2}\log(r)$ when restricting $\tmix^r(\varepsilon)$ to super-harmonic initial conditions.
	\end{thm}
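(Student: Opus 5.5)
Our plan is to reduce the problem to a Gaussian computation. The Glauber dynamics is linear: writing $h_t = \mu_t + \gamma_t$, where $\mu_t$ is the asynchronous DeGroot dynamics started at $h_0$ (same clocks, $Q=P$) and $\gamma_t$ is the Glauber dynamics started from $0$ driven by the same clocks and noises, we get that $h_t$ given the clock process $\cF$ is Gaussian with mean $\mu_t$ and some covariance $\Sigma_t$ depending only on the clocks. Averaging over clocks, the annealed mean is $\e[\mu_t]=e^{-t\lapl}h_0$, since each site rings at rate one and moves $h(x)$ to $Ph(x)$. On $\Lambda_n$ with constant $\pi$, $\lapl$ is $\frac{1}{2d}$ times the graph Laplacian, so its spectral gap is $\lambda = 1-\cos(\pi/(n+1)) \approx \frac{\pi^2}{2n^2}$. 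This matches $t_n\approx \frac{1}{2\lambda}\log r_n + \frac{s}{\lambda}$.

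The key steps, in order, are as follows.

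\textbf{Step 1 (monotonicity).} If $h_0$ is super-harmonic, then every DeGroot update preserves super-harmonicity and decreases values pointwise: replacing $h(x)$ by $Ph(x)\le h(x)$ lowers $Ph$ at the neighbours and raises $\lapl h(x)$ to $0$. Hence $\mu_t$ is super-harmonic and pointwise nonincreasing along every clock realization. From this we will derive concentration: the quantity $\ANG{\mu_t,\phi}_\pi$, with $\phi$ the positive principal eigenfunction of $\lapl$, has small variance relative to its mean. This is the role of Proposition \ref{prop:bound}. The idea is that monotonicity gives $\e[\cE(\mu_t)]$ or second moments controlled by the deterministic semigroup $e^{-t\lapl}$ up to lower-order corrections.

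\textbf{Step 2 (projection).} At time $t_n$, the mean $e^{-t_n\lapl}h_0$ is dominated by its projection on $\phi$. Since $r_n\ll e^n$, the second eigenvalue gap $\lambda_2-\lambda\asymp 1/n^2$ times $t_n \asymp n^2\log r_n$ kills the higher modes, but only barely. For this reason we need $r_n\ge |\Lambda_n|$, so that the higher modes, whose energy is at most $r_n$, decay relative to the first mode. We also need to bound the first-mode coefficient: for super-harmonic $h_0$ with $\cE(h_0)=r_n$, we have $\ANG{h_0,\phi}_\pi^2\lambda\le r_n$, with near-equality at the worst case (essentially $h_0\propto\phi$, which is super-harmonic). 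Thus the signal $\ANG{\mu_{t_n},\phi}_\pi$ has size at most $\sqrt{r_n/\lambda}\,e^{-\lambda t_n}=e^{-s}/\sqrt{\lambda}$. The stationary $\ANG{\Gamma,\phi}_\pi$ has variance $1/\lambda$. So the TV distance between $N(m,1/\lambda)$ and $N(0,1/\lambda)$ with $m\sqrt\lambda\to e^{-s}$ is $\erf(e^{-s}/(2\sqrt2))$, which gives the profile.

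\textbf{Step 3 (full TV comparison).} We must show that the remaining coordinates do not contribute. For the lower bound we use the test function $\ANG{\cdot,\phi}$: conditionally on the clocks, it is Gaussian with covariance close to that of $\Gamma$ (because $\gamma_t$ has mixed in covariance by time $t_n$, as $\Sigma_t\to g$ in relative error $e^{-2\lambda t}$-type), and by Step 1 its mean concentrates. For the upper bound we use convexity of TV over the clock randomness and compare $N(\mu_t,\Sigma_t)$ with $N(0,g)$. The covariance part is handled via the Hellinger/KL bound $\tr$-type, showing that $\|g^{-1/2}\Sigma_tg^{-1/2}-I\|_{HS}\to0$; we expect this to need $t\gg\frac{1}{\lambda}\log|\Lambda_n|^{1/2}$, which is again where $r_n\ge|\Lambda_n|$ enters. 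The mean part is handled by $\cE(\mu_t-\ANG{\mu_t,\phi}\phi)\to 0$ together with concentration of the $\phi$-coefficient.

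The main obstacle is Step 1's concentration for the random (annealed versus quenched) DeGroot mean. The dynamics are not deterministic, and TV requires the quenched mean to be close to $e^{-t\lapl}h_0$ in the $\phi$ direction at scale $o(1/\sqrt\lambda)$. Our first attempt will be to write $\ANG{\mu_t,\phi}_\pi e^{\lambda t}$ as a martingale-like quantity (it is exactly a martingale when $\phi$ is an eigenfunction of $\lapl$, since $\frac{d}{dt}\e[\ANG{\mu,\phi}\mid\cF_t]=-\ANG{\lapl\mu,\phi}=-\lambda\ANG{\mu,\phi}$). We will then bound its quadratic variation by $\sum_x \phi(x)^2(\lapl\mu_s(x))^2$, using super-harmonicity and monotonicity together with $\|\phi\|_\infty^2\lesssim 1/|\Lambda_n|$ to show that it is $o$ of the squared mean.
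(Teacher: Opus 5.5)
Your overall architecture matches the paper's: Gaussianity given the clocks, splitting mean from covariance, convexity over clocks, and the principal eigenfunction as the extremal initial condition. But there is a genuine gap, at exactly the step you flag as the main obstacle.

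\textbf{The missing quenched control.} For the upper bound you need, with high probability over the clocks and uniformly over super-harmonic $h_0$ with $\cE(h_0)=r_n$, that the \emph{quenched} energy satisfies $\cE(\mu_{t_n})\le e^{-2s}+o(1)$. Your Step 2 controls only the annealed mean $e^{-t\lapl}h_0$. The claim $\cE(\mu_t-\ANG{\mu_t,\phi}_\pi\phi)\to0$ for the quenched process is asserted without argument, and the martingale $e^{\lambda t}\ANG{\mu_t,\phi}_\pi$ addresses only one coordinate.

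Even for that coordinate, the quadratic-variation bound does not follow from what you list. A ring at $x$ lowers $\cE(\mu)$ by exactly $\pi(x)(\lapl\mu(x))^2$. So in expectation the asynchronous dynamics dissipates energy at rate $\|\lapl\mu_u\|_\pi^2$, half the rate of the heat flow, and the only a priori decay is $\e\,\cE(\mu_u)\le e^{-\lambda u}\cE(h_0)$. Plugging this into $\e\int_0^t e^{2\lambda u}\sum_x\phi(x)^2(\lapl\mu_u(x))^2\,du$ bounds the variance of $\ANG{\mu_{t_n},\phi}_\pi$ only by order $\sqrt{r_n}/|\Lambda_n|$. That is $o(1/\lambda)$ only when $r_n\ll n^{2d+4}$, far short of $r_n\ll e^n$. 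Getting rate $2\lambda$ for the quenched energy is the concentration statement itself, and super-harmonicity plus pointwise monotonicity of $\mu_t$ do not give it directly.

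The same issue recurs for the covariance. You need $\|\lapl^{1/2}\Sigma_t\lapl^{1/2}-I\|_F\to0$ for typical clock realizations, not just for $\e\Sigma_t=g-e^{-2t\lapl}g$. Your proposal has no tool for the quenched $\Sigma_t$.

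\textbf{The paper's key idea: a pointwise sandwich.} Conditionally on the clocks, $h_0(S_k)$ along the path of the backward walk is a supermartingale (Lemma \ref{lem:mart}). The skeleton of its actual jumps is the ordinary discrete killed walk, independent of the clocks (Proposition \ref{prop:stopping}). Optional stopping at the $\FLOOR{t-s}$-th and $\FLOOR{t+s+1}$-th jump, together with Poisson concentration of the number of jumps, gives $P^{\FLOOR{t+s+1}}h_0-\varepsilon\le\mu_t\le P^{\FLOOR{t-s}}h_0+\varepsilon$ with high probability for $s\asymp\log r_n/\sqrt\lambda$ (Proposition \ref{prop:bound}).

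Because $\lapl\mu_t\ge0$ and $\lapl P^kh_0\ge0$, pairing against these nonnegative functions converts the pointwise sandwich into an energy bound:
\[
\cE\big(\mu_t-P^{\FLOOR t}h_0\big)\le 2\ANG{P^{\FLOOR{t-s}}h_0-P^{\FLOOR{t+s+1}}h_0,\lapl P^{\FLOOR t}h_0}_\pi
\]
up to $\varepsilon$-terms. The right-hand side vanishes by spectral calculus since $\lambda s\to0$. All modes are handled at once, and no separation of $\phi$ is needed. Applying the same sandwich to the super-harmonic functions $y\mapsto g(x,y)$ controls the quenched $\Sigma_t$.

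\textbf{Where the hypotheses actually enter.}
\begin{itemize}
\item The condition $r_n\ll e^n$ is what makes $\lambda s\asymp\sqrt\lambda\log r_n\to0$. It is not about the annealed higher modes: their energy is at most $r_n^{1-\lambda_2/\lambda}e^{-2s\lambda_2/\lambda}$, which dies for any $r_n\to\infty$.
\item The condition $d\ge3$, which your plan never uses, enters through the covariance estimate via $|\Lambda_n|\gg\log(1/\lambda)^2/\lambda$.
\item Your expected covariance requirement $t\gg\frac{1}{2\lambda}\log|\Lambda_n|$ would fail when $r_n=|\Lambda_n|$ and $s\le0$, which the theorem allows.
\end{itemize}
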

	The condition that $r\geq |\Lambda_n|$ is needed in our proof to guarantee starting above the typical fluctuations of the DGFF. Note that on an arbitrary graph, if $Z\sim \Gamma$, then $\lapl^{1/2}Z$ is a standard $|V|$-dimensional Gaussian vector, so 
	\[
	\e[\cE(Z)] = \frac{1}{2}\e_\Gamma[\|\lapl^{1/2}Z\|^2] = \frac{|V|}{2}.
	\]
	\\
	Theorem \ref{thm:1} is a special case of our more general result on arbitrary discrete Gaussian Free Fields:
	\begin{thm}\label{thm:2}
		Let $(\Gamma_n)_{n\geq 0}$ be an arbitrary sequence of DGFF on sets of vertices $V_n$, as defined in Subsection \ref{subsec:dgff} (we assume the size of $V_n$ diverges).\\
		If $h_0 : V_n \to \R$ is a super-harmonic function, we denote $(\overline{h}_t)_{t\geq 0}$ the Glauber dynamics for $\Gamma_n$ started from a noisy initial distribution $\overline{h_0} = \gamma+h_0$, with $\gamma\sim\Gamma$.\\
		Let $\lambda_n$ be the spectral gap
		\[
			\lambda_n = \min_{\substack{f : V\to\R\\f\neq 0}}\frac{\ANG{f,\lapl_nf}_{\pi_n}}{\ANG{f,f}_{\pi_n}}.
		\]
		Let $r_n$ be any real function of $n$ such that
		\[
		 \max(|V_n|,\pi_n(V_n)) \leq r_n \ll \exp\PAR{\frac{1}{\sqrt{\lambda_n}}}.
		\]
		Fix $s\in\R$ and set  
		\[
		t_n = \frac{1}{2\lambda_n}\log(r_n) + \frac{s}{\lambda_n}.
		\]
		We have the following profile for the evolution started from noisy super-harmonic functions
		\[
		\sup_{\substack{\cE(h_0) = r_n\\h_0\text{ super-harmonic}}}\|\cN(h_0,g)\scrP_{t_n}-\Gamma\|_{TV} \xrightarrow{n\to\infty} \erf\PAR{\frac{e^{-s}}{2\sqrt{2}}}.
		\]
		Furthermore if $\lambda_n$ satisfies
		\[
			r_n\gg \frac{\log(1/\lambda_n)^2}{\lambda_n},
		\]
		then we also have the same profile for deterministic initial conditions
		\[
			\sup_{\substack{\cE(h_0) = r_n\\h_0\text{ super-harmonic}}}\|\delta_{h_0}\scrP_{t_n}-\Gamma\|_{TV} \xrightarrow{n\to\infty} \erf\PAR{\frac{e^{-s}}{2\sqrt{2}}}.
		\]
	\end{thm}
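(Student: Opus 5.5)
We will prove Theorem \ref{thm:2} (Theorem \ref{thm:1} then follows from $\lambda_n\approx d\pi^2/(n^2)\cdot\frac{1}{d}$ on the grid, after matching normalisations). The plan has three steps: a decomposition of the dynamics, a Gaussian comparison for noisy starts, and a smoothing argument for deterministic starts.

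\textbf{Step 1: decomposition.} By linearity of the Glauber update (Lemma 1), the dynamics splits as $h_t = \mu_t + \xi_t$. Here $\mu_t$ is the asynchronous DeGroot dynamics with $Q=P$ started from $h_0$. Given the clock ring sequence, $\xi_t$ is a centered Gaussian field driven by the same clocks and started from $0$, or from $\gamma$ in the noisy case. Started from $\gamma\sim\Gamma$, stationarity holds for every fixed clock sequence, because each update preserves $\Gamma$ whatever the site. Hence, conditionally on the clocks $\omega$, $\overline h_t\sim\cN(\mu_t(\omega),g)$. Therefore
\[
\|\cN(h_0,g)\scrP_t-\Gamma\|_{TV}=\Bigl\|\e_\omega\bigl[\cN(\mu_t,g)\bigr]-\cN(0,g)\Bigr\|_{TV}.
\]

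\textbf{Step 2: monotonicity and concentration of $\mu_t$.} Super-harmonicity is preserved by updates. Replacing $\mu(x)$ by $P\mu(x)\le\mu(x)$ only decreases $\mu$ at $x$, which decreases $P\mu$ elsewhere, so $\lapl\mu\ge0$ is kept. The process is then pointwise nonincreasing and stays nonnegative by the minimum principle with zero boundary. With this monotonicity I would show that $\mu_t$ is close to the deterministic semigroup $e^{-t\lapl}h_0$ in the relevant norm. This is where the paper's Proposition \ref{prop:bound} enters. The relevant quantity is $\|\lapl^{1/2}\mu_t\|_{\pi}$, since the TV between $\cN(m,g)$ and $\cN(0,g)$ is $\erf(\|\lapl^{1/2}m\|/(2\sqrt2))$.

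Expanding $h_0$ in the eigenbasis, at $t_n$ the energy $\cE(e^{-t\lapl}h_0)\approx e^{-2\lambda t}\langle h_0,\phi_1\rangle^2\lambda$, up to faster modes. I would then show two things:
\begin{itemize}
\item For super-harmonic $h_0$ of energy $r_n$, the worst case is $h_0$ essentially aligned with $\phi_1$. Since $\phi_1$ is positive and super-harmonic, equality is attained, and the supremum yields the profile $\erf(e^{-s}/(2\sqrt2))$.
\item Higher modes contribute $o(1)$ once $t_n\ge \frac{1}{2\lambda}\log r_n$. This uses $r_n\ge|V_n|,\pi(V_n)$ to control the sum over modes.
\end{itemize}
The main obstacle is the concentration step: showing that the random $\mu_t$ fluctuates around its mean by $o(1)$ in energy. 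Monotonicity helps here. I would bound $\e\|\mu_t-\e\mu_t\|^2$ by a martingale argument, using that each update moves $\mu$ by $\lapl\mu(x)\ge0$ and the total decrease is controlled. I would also compare $\e\mu_t$ with $e^{-t\lapl}h_0$, which is exact since $\frac{d}{dt}\e\mu_t=-\lapl\e\mu_t$ by linearity. The condition $r_n\ll e^{1/\sqrt{\lambda_n}}$ is what keeps the window $1/\lambda$ larger than the fluctuation scale.

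\textbf{Step 3: deterministic starts.} For $\delta_{h_0}$, the conditional law given $\omega$ is $\cN(\mu_t,\Sigma_t(\omega))$ with $\Sigma_t\le g$. I would compare it with the noisy start by running the chain for an extra $O(\log(1/\lambda)/\lambda)$ burn-in. The covariance deficit $g-\Sigma_t$ decays like the DeGroot dynamics of $\gamma$, whose energy is about $|V|/2$, so it becomes negligible in TV within $\frac{1}{2\lambda}\log(|V|)$. This time is absorbed into $o(1/\lambda)$ precisely when $r_n\gg\log(1/\lambda)^2/\lambda$. A triangle inequality then transfers the profile from the noisy case to the deterministic case.
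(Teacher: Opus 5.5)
You have the same architecture as the paper: the mean/noise decomposition with $\overline h_t\mid\cP\sim\cN(\mu_t,g)$, Proposition~\ref{prop:1}, comparing $\mu_t$ with a deterministic linear evolution of $h_0$, using $\phi_1$ for sharpness, and a covariance comparison for deterministic starts. However, the two steps that carry the real content are not supplied.

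\textbf{Step~2.} You must show $\cE(\mu_{t_n}-m_{t_n})\to0$ in probability even though $\cE(h_0)=r_n\to\infty$. So the fluctuations must be $o(1)$ in absolute energy after the initial energy has been divided by $r_n$. ``A martingale argument using that the total decrease is controlled'' does not give this, and you never say where monotonicity enters. For instance, the natural identity $\frac{d}{du}\e\,\cE(\mu_u)=-\e\|\lapl\mu_u\|_\pi^2$ together with $\|\lapl f\|_\pi^2\ge\lambda\cE(f)$ only yields $\e\,\cE(\mu_{t_n})\le e^{-\lambda t_n}r_n=\sqrt{r_n}\,e^{-s}$.

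The missing idea is the paper's pathwise argument. Conditionally on the clocks, $h_0(S_k)$ along the path of the backward walk is a supermartingale (Lemma~\ref{lem:mart}). The path with its idle steps removed is the ordinary discrete walk, independent of the clocks (Proposition~\ref{prop:stopping}). Optional stopping at the $n$-th effective jump, plus Poisson concentration of the number of jumps, gives with high probability
$P^{\FLOOR{t+s+1}}h_0-\varepsilon\le\mu_t\le P^{\FLOOR{t-s}}h_0+\varepsilon$ (Proposition~\ref{prop:bound}). Since $\lapl\mu_t\ge0$ and $\lapl P^kh_0\ge0$, pairing against these nonnegative vectors turns the pointwise sandwich into an energy bound (Proposition~\ref{prop:conc}).

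This is where $r_n\ll e^{1/\sqrt{\lambda_n}}$ enters: it is exactly $\lambda_n s\to0$ for $s\asymp\log r_n/\sqrt{\lambda_n}$. It is also where $r_n\ge\max(|V_n|,\pi_n(V_n))$ is used, namely to absorb the additive $\varepsilon$-errors and polynomial prefactors. That hypothesis is not needed for the higher modes of $e^{-t\lapl}h_0$, since $\cE(e^{-t\lapl}h_0)\le e^{-2\lambda t}\cE(h_0)$ always holds.

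\textbf{Step~3.} This does not go through as sketched either. Given the clocks, you compare $\cN(\mu_t,\Sigma_t(\omega))$ with $\cN(\mu_t,g)$. Measuring the deficit by the energy of the DeGroot evolution of $\gamma$ gives, even at the level of means, $\sum_k e^{-2\lambda_k t_n}\le|V_n|e^{-2\lambda_n t_n}=|V_n|e^{-2s}/r_n$. In the admissible regime $r_n\asymp|V_n|$ this is of order $e^{-2s}$, not $o(1)$. Moreover, $\frac{1}{2\lambda}\log|V_n|$ is not $o(1/\lambda)$, so nothing is ``absorbed'', and the condition $r_n\gg\log(1/\lambda_n)^2/\lambda_n$ is never actually derived.

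What is needed is the Frobenius bound of Theorem~\ref{thm:gauss}, that is, control of $\e\|\lapl g_t\|_F$ with $g_t=g-\Sigma_t$. At the level of means this is of order $\sqrt{\sum_k e^{-4\lambda_k t}}\le\sqrt{|V_n|}\,e^{-2s}/r_n$, which is quadratically better. You also need concentration of the random matrix $g_t(\omega)$. The paper obtains this by running the same supermartingale sandwich twice, for the two walkers sharing the clocks, on the super-harmonic functions $y\mapsto g(x,y)$ and $y\mapsto P^kg(y,x)$. The resulting bound at $t_n$ is of order $e^{-2s}(\log(1/\lambda_n)+1)/\sqrt{\lambda_n r_n}$, and that is what requires $r_n\gg\log(1/\lambda_n)^2/\lambda_n$.
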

	For the case of grids, the spectral gap is independent of the dimension and is given by
	\[
	\lambda_n = 1-\cos\PAR{\frac{\pi}{n}} \sim \frac{\pi^2}{2n^2}.
	\]
	Since $|\Lambda_n| = n^d$, the condition $|\Lambda_n|\gg \log(1/\lambda_n)^2/\lambda_n$ is satisfied for dimensions $d\geq 3$.
	\section{Properties of the Glauber dynamics}
	We shall expose some key characteristics of this model and develop tools using these properties. First, we decompose the process in terms of its two key quantities, mean and covariance. Second, we will look at useful representation of the dynamics in terms of backward random walks as introduced in \cite{gangulyCutoffGlauberDynamics2023}. The final sections of the paper will then analyze independently each quantity, mean and covariance. Combining both estimates will complete the proof.\\
	In the following we fix $\Gamma$, an arbitrary DGFF, as in Subsection \ref{subsec:dgff}.
	
	\subsection{Decomposition}\label{subsec:decomp}
	A crucial feature of the dynamics is that, when conditioned on the clocks, the Glauber evolution is Gaussian at each fixed time. \\
	Indeed if the clock at site $x$ rings at time $t$, the function $h_t$ is updated at $x$ to be a $Ph(x)+Z/\sqrt{\pi(x)}$, where $Z$ is a standard normal variable. We can rewrite this update using $e_x$, the vector of the standard basis associated to $x$ 
	\begin{align*}
		h_t &= h_{t^-} -\ANG{h_{t^-},e_x}e_x + \ANG{Ph_{t^-},e_x}e_x + \frac{Z}{\sqrt{\pi(x)}}e_x\\
		&= h_{t^-}-\ANG{\lapl h_{t^-},e_x}e_x + \frac{Z}{\sqrt{\pi(x)}}e_x.
	\end{align*}
	From this we can deduce that at any fixed time, when conditioning on where and when the clocks ring, the evolution is a Gaussian vector.
	
	\par Since Gaussian vectors are uniquely characterized by two parameters, mean and covariance, we can get a relatively simple description of the entire evolution. In essence the process is the sum of two distinct dynamics:
	\begin{itemize}
		\item the ``noise process" $(\sigma_t)_{t\geq0}$: a zero mean Gaussian vector. Its covariance matrix monotonically increases, in the sense of symmetric positive-definite matrices, towards the Green function. Furthermore, if the initial condition is taken to be a shift of the equilibrium, this process is completely stationary.
		\item the \textit{asynchronous DeGroot dynamics} $(\mu_t)_{t\geq0}$: a deterministic averaging process, that depends only on the order and location of the clock rings. 
	\end{itemize}
	For the rest of the paper we denote $\cP$ a Poisson point process on $V\times\R_+$.  We list the elements of $\cP = \{(x_1,t_1),\dots,(x_i,t_i),\dots\}$ ordered by time $t_i$ (we assume no two $t_i$ are the same, which is true almost surely). By convention we set $t_0 = 0$.\\
	\par{}We shall detail the decomposition of the evolution $(h_t)_{t\geq 0}$ with an arbitrary initial condition $h_0 : V\to\R$. Let $Z_1,\dots,Z_i,\dots$ be i.i.d. standard normal variables, independent of $\cP$. We write $(h_t)_{t\geq0}$, as the sum of two processes,
	$(\mu_t)_{t\geq0}$ and $(\sigma_t)_{t\geq0}$. \\
	First set $\mu_0 = h_0$ and $\sigma_0 = 0$. For $i\geq 1$ and $t\in [t_{i-1},t_i[$, set:
	\begin{itemize}
		\item $\mu_t(x) = \mu_{t_{i-1}}(x)$ for $x\neq x_i$ and 
		\[
		\mu_t(x_i) = P\mu_{t_{i-1}}(x_i).
		\]
		\item $\sigma_t(x) = \sigma_{t_{i-1}}(x)$ for $x\neq x_i$ and 
		\[
		\sigma_t(x_i) = P\sigma_{t_{i-1}}(x_i) + \frac{Z_i}{\sqrt{\pi(x_i)}}.
		\]
	\end{itemize}
	By linearity we observe that $\mu_t+\sigma_t$ has the same distribution as $h_t$. We denote
	\[ \Sigma_t(x,y) =\Cov(\sigma_t(x),\sigma_t(y))\pi(y), \]
	the re-normalized covariance matrix of $\sigma_t$ (the covariance matrix in the $\ell^2(\pi)$ space).\\
	\begin{itemize}
		\item On the first hand, note that $(\sigma_t)_{t\geq0}$ is independent of the initial condition. In fact, we can see from the definition that it is the Glauber dynamics started at 0. We note here that $\Sigma_t \prec g$ in the sense of symmetric matrix ordering \cite{gangulyCutoffGlauberDynamics2023} and that the convergence of $\Sigma_t$ to $g$ is monotone.
		\item On the other hand, from the definition we see that $\mu_t$ is the asynchronous DeGroot dynamics.
	\end{itemize}
	The final parts of the paper will deal with the independent study of each quantity, $(\mu_t)_{t\geq 0}$ in Section \ref{sec:mean} and $(\Sigma_t)_{t\geq0}$ in Section \ref{sec:var}. 
	
	\subsection{Combining estimates on the two processes}\label{subsec:comb}
	In this section we shall explain how to combine estimates on $\Sigma_t$ and $\mu_t$ to obtain the main results. Consider initial conditions that are shifts of the equilibrium. For $h_0:V\to\R$, we denote $(\overline{h}_t)_{t\geq 0}$ the evolution under the Glauber dynamics of the DGFF with initial condition distributed as $\cN(h_0, g)$. Let us decompose again the evolution.\\ 
	We keep the same construction for the variance process $(\overline{\sigma}_t)_{t\geq 0}$, except that we set $\overline{\sigma_0}\sim\Gamma$. Recall that the evolution rules for $(\sigma_t)_{t\geq0}$ are exactly the Glauber dynamics, so with this new definition $\overline{\sigma_t}$ is stationary for all time $t$. In fact we have a stronger result.
	\begin{lem}
		For any time $t\geq 0$, $\overline{\sigma_t}$ is independent of the clocks $\cP$.
	\end{lem}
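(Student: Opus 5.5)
We prove the stronger claim that, conditionally on $\cP$, the law of $\overline{\sigma_t}$ is $\Gamma$ for every realization of the clocks. Since $\Gamma$ does not depend on $\cP$, this gives independence of $\overline{\sigma_t}$ and $\cP$ at once. Conditionally on $\cP$, the process $\overline{\sigma}$ is built from $\overline{\sigma_0}\sim\Gamma$ and the i.i.d. variables $Z_1,Z_2,\dots$, which are independent of $\cP$, through a deterministic finite sequence of site updates. Fix a realization $\cP = \{(x_1,t_1),(x_2,t_2),\dots\}$ and a time $t$. Only finitely many rings, say $(x_1,t_1),\dots,(x_k,t_k)$ with $t_k\le t<t_{k+1}$, occur before $t$, and $\overline{\sigma_t} = \overline{\sigma_{t_k}}$.

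We argue by induction on $i$ that $\overline{\sigma_{t_i}}$, conditionally on $\cP$, has law $\Gamma$ and is independent of $(Z_j)_{j>i}$. The case $i=0$ holds because $\overline{\sigma_0}\sim\Gamma$ is sampled independently of $\cP$ and of the $Z_j$. For the inductive step, $\overline{\sigma_{t_i}}$ agrees with $\overline{\sigma_{t_{i-1}}}$ off $x_i$. At $x_i$ it equals $P\overline{\sigma_{t_{i-1}}}(x_i) + Z_i/\sqrt{\pi(x_i)}$, where $Z_i$ is independent of $\overline{\sigma_{t_{i-1}}}$ by the induction hypothesis. Now $P\overline{\sigma_{t_{i-1}}}(x_i)$ depends only on the values $\{\overline{\sigma_{t_{i-1}}}(y), y\neq x_i\}$, because $P$ has zero diagonal. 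By the earlier Lemma on the conditional law of $\gamma(x)$ given $\{\gamma(y),y\neq x\}$, the new value is therefore a fresh sample from the conditional law of $\Gamma$ at $x_i$ given the other coordinates. Resampling one coordinate from its conditional law preserves $\Gamma$: the law of the other coordinates is unchanged, and the new coordinate has the correct conditional law given them. So $\overline{\sigma_{t_i}}\sim\Gamma$. Independence from $(Z_j)_{j>i}$ holds because $\overline{\sigma_{t_i}}$ is a function of $\overline{\sigma_0}$ and $Z_1,\dots,Z_i$ only.

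Hence, for every bounded measurable $F$ and every event $A$ measurable with respect to $\cP$,
\[
\e\SBRA{F(\overline{\sigma_t})\IND_A} = \e\SBRA{\IND_A\,\e\SBRA{F(\overline{\sigma_t})\mid\cP}} = \e_\Gamma[F]\,\p(A),
\]
which is independence. The main point requiring care is to justify conditioning on the clocks, that is, to treat $\cP$ as a fixed deterministic sequence of update sites. This is legitimate because $(\overline{\sigma_0},Z_1,Z_2,\dots)$ is independent of $\cP$, and on a finite time window only finitely many rings occur almost surely.
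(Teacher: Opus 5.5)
Your proof is correct. Its skeleton matches the paper's: fix a realization of $\cP$ and show by induction over the successive rings that each update maps $\Gamma$ to $\Gamma$. The difference is in how you verify the inductive step.

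The paper notes that a zero-mean Gaussian stays a zero-mean Gaussian under the linear update. It then checks the covariance entries in row $x_i$ by direct computation, using reversibility, the unit diagonal of $\lapl$, and $g\lapl = I_V$. That same computation reappears in the covariance part of the backward random walk representation.

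You instead use the generic heat-bath principle. Because $P$ has zero diagonal and $Z_i$ is independent of $\overline{\sigma}_{t_{i-1}}$, the new value at $x_i$ is drawn from exactly the conditional law of $\Gamma$ given the other coordinates, as identified in the earlier conditional-law Lemma. The joint law is therefore unchanged.

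Your route needs no matrix algebra and no separate argument that Gaussianity is preserved. It would apply verbatim to the Glauber dynamics of any Gibbs measure whose single-site conditional laws are known; all the Gaussian-specific content is absorbed into that Lemma. It also makes explicit two points the paper passes over silently. The first is the independence of $Z_i$ from the current configuration given $\cP$. The second is the step from ``the conditional law given $\cP$ is $\Gamma$ for every realization'' to genuine independence of $\overline{\sigma}_t$ from $\cP$.
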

	By linearity we again have $\overline{h}_t = \overline{\sigma}_t + \mu_t$, with $\mu_0 = h_0$. We can now conclude that the process $(\overline{h_t})_{t\geq 0}$ is the independent sum of the mean process $(\mu_t)_{t\geq 0}$ and a stationary process equal to the equilibrium.
	\begin{proof}
		Fix $\cP = \{(x_1,t_1),\dots,(x_i,t_i),\dots\}$ a realization of the Poisson point process. Let us show that the law of $\overline{\sigma}_t$ conditioned on $\cP$ is $\Gamma$. It suffices to show that $\overline{\sigma}_{t_{i-1}} \sim\Gamma$ implies $\overline{\sigma}_{t_i}\sim\Gamma$.\\
		By linearity we know that if $\overline{\sigma}_{t_{i-1}}$ is a zero mean Gaussian then $\overline{\sigma}_{t_i}$ will also be, so we only need to check that the covariance matrices correspond. The only coordinate that changes are the one at site $x_i$. Calculating the entry in the new covariance matrix yields
		\begin{align*}
			\Cov(\sigma_{t_i}(x_i), \sigma_{t_i}(x_i)) &= \frac{1}{\pi(x_i)}\Cov(Z_i,Z_i) +\Cov(P\sigma_{t_{i-1}}(x_i), P\sigma_{t_{i-1}}(x_i))\\
			&= \frac{1}{\pi(x_i)} + \sum_{x,y\in V}P(x_i,x)\frac{g(x,y)}{\pi(y)}P(x_i,y)\\
			&= \frac{1}{\pi(x_i)} + \frac{1}{\pi(x_i)}(I-\lapl)g(I-\lapl)(x_i,x_i)\\
			&=\frac{g(x_i,x_i)}{\pi(x_i)},
		\end{align*}
		where we used that the diagonal of the Laplacian is constant equal to 1 and the reversibility of $P$. For other entries $x\neq x_i$ we have
		\[
		\Cov(\sigma_{t_i}(x), \sigma_{t_i}(x_i)) = \sum_{y\in V}\frac{g(x,y)}{\pi(y)}P(x_i,y) = \frac{gP(x,x_i)}{\pi(x_i)} = \frac{g(x,x_i)}{\pi(x_i)}-\underbrace{\frac{g\lapl(x,x_i)}{\pi(x_i)}}_{=0}.
		\]
		This shows that $\overline{\sigma}_{t_i}\sim\Gamma$.
	\end{proof}
	\par We recall the straightforward calculation of the total variation distance of two Gaussian vectors with the same covariance matrix.
	\begin{pro}\label{prop:1}
		We have
		\[
		\|\cN(\mu_1,\Sigma)-\cN(\mu_2,\Sigma)\|_{TV} = \erf\PAR{\frac{\|\Sigma^{-1/2}(\mu_1-\mu_2)\|}{2\sqrt{2}}},
		\]
		where $\|.\|$ is the standard Euclidean norm.
	\end{pro}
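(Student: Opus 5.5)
The plan is to reduce to a one-dimensional problem through an affine change of variables, since total variation distance is invariant under bijective measurable maps. With $\Sigma$ symmetric positive definite, the map $x\mapsto \Sigma^{-1/2}(x-\mu_2)$ sends $\cN(\mu_1,\Sigma)$ to $\cN(m,I)$ and $\cN(\mu_2,\Sigma)$ to $\cN(0,I)$, where $m=\Sigma^{-1/2}(\mu_1-\mu_2)$. Next apply an orthogonal matrix $O$ with $Om=\|m\|e_1$; it preserves $\cN(0,I)$ and sends $\cN(m,I)$ to $\cN(\|m\|e_1,I)$.

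Both measures now factor as a product of the same standard Gaussian on the last $d-1$ coordinates with either $\cN(\|m\|,1)$ or $\cN(0,1)$ on the first coordinate. The density formula for the total variation distance therefore integrates out the common factor, and
\[
\|\cN(\mu_1,\Sigma)-\cN(\mu_2,\Sigma)\|_{TV}=\|\cN(a,1)-\cN(0,1)\|_{TV},\qquad a=\|m\|.
\]

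For the one-dimensional computation, let $\varphi$ be the standard Gaussian density and assume $a>0$ (the case $a=0$ is trivial). The densities $\varphi(x-a)$ and $\varphi(x)$ cross only at $x=a/2$, and $\varphi(x-a)>\varphi(x)$ exactly when $x>a/2$. Hence
\[
\tfrac12\int_{\R}|\varphi(x-a)-\varphi(x)|\,dx=\int_{a/2}^{\infty}\big(\varphi(x-a)-\varphi(x)\big)dx=\p(|N|\le a/2),
\]
where $N\sim\cN(0,1)$. The last equality uses $\p(N>-a/2)-\p(N>a/2)$ together with the symmetry of $N$. Finally, $\p(|N|\le a/2)=\frac{1}{\sqrt{2\pi}}\int_{-a/2}^{a/2}e^{-u^2/2}du$. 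Substituting $u=\sqrt2\,t$ turns this into $\frac{1}{\sqrt\pi}\int_{-a/(2\sqrt2)}^{a/(2\sqrt2)}e^{-t^2}dt=\erf\big(a/(2\sqrt2)\big)$ with the paper's convention for $\erf$, which is the claim.

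There is no real obstacle here. The only point needing care is the convention: the result is stated with the standard Euclidean norm and covariance $\Sigma$. When the proposition is applied in $\ell^2(\pi)$, one must use the Euclidean covariance matrix $\big(g(x,y)/\pi(y)\big)$, or equivalently rewrite $\|\Sigma^{-1/2}(\mu_1-\mu_2)\|^2$ as $\ANG{\mu_1-\mu_2,g^{-1}(\mu_1-\mu_2)}_\pi$. I would record this identity in passing, since it is presumably how the energy $\cE(\mu_t)$ enters later.
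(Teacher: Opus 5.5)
Your proof is correct and follows the paper's own argument: an affine change of variables to reduce to $\Sigma=I$, $\mu_2=0$, then a reduction to one dimension (which you justify more explicitly than the paper does, via an orthogonal rotation and the product structure), then an explicit integral over the half-line cut at $a/2$. Your closing remark is also right: for the Euclidean covariance $(g(x,y)/\pi(y))$ one gets $\|\Sigma^{-1/2}(\mu_1-\mu_2)\|^2=\cE(\mu_1-\mu_2)$, and this is exactly how the paper applies the proposition.
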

	\begin{proof}
		By an affine transformation, we only need to prove the proposition for $\Sigma = I_d$ and $\mu_2 = 0$. Then
		\begin{align*}
			\|\cN(\mu,I_d)-\cN(0,I_d)\|_{TV} &= \frac{1}{2(2\pi)^{k/2}}\int_{\R^d}\ABS{e^{-\frac{1}{2}\|x\|^2} - e^{-\frac{1}{2}\|x-\mu\|^2}}dx\\
			&=\frac{1}{2(2\pi)^{k/2}}\int_{\R^d}\ABS{1 - e^{-\frac{1}{2}(-2\ANG{x,\mu}+\|\mu\|^2)}}e^{-\frac{1}{2}\|x\|^2}dx\\
			&=\|\cN(\|\mu\|, 1) - \cN(0,1)\|_{TV}.
		\end{align*}
		So, it is enough to look at the 1D case. If $\mu$ and $\nu$ are two real measures with densities $f$ and $g$ with respect to the Lebesgue measure, then 
		\[
		\|\mu-\nu\|_{TV} = \int_{f(x) \geq g(x)}(f(x)-g(x))dx.
		\]
		In our case the set over which we integrate is explicitly
		\[
		\{x\in\R | e^{-\frac{1}{2}x^2} \geq e^{-\frac{1}{2}(x-\|\mu\|)^2}\} = ]-\infty,\|\mu\|/2].
		\]
		Setting $Z\sim\cN(0,1)$, we can conclude that
		\begin{align*}
			\|\cN(\|\mu\|, 1) - \cN(0,1)\|_{TV} &= \p(Z\leq \|\mu\|/2) - \p(\|\mu\|+Z\leq \|\mu\|/2)\\
			&=\p(-\|\mu\|/2\leq Z\leq \|\mu\|/2) = \erf\PAR{\frac{\|\mu\|}{2\sqrt{2}}}.
		\end{align*}
	\end{proof}
	Another estimate of total variation distance is that of two Gaussian vectors with same mean but different covariance matrix. This estimate is much more involved, and the proof is given in \cite{devroyeTotalVariationDistance2023} and the general case was solved in \cite{arbasPolynomialTimePrivate2023}. 
	\begin{thm}\label{thm:gauss}
		Let $\Sigma_1,\Sigma_2$ be two positive semi-definite matrix of same size. There exists an universal constant $C$ such that 
		\[\|\cN(0,\Sigma_1)-\cN(0,\Sigma_2)\|_{TV} \leq C \|\Sigma_1^{-1/2}\Sigma_2\Sigma_1^{-1/2}-I\|_F,\]
		where $\|.\|_F$ is the Frobenius norm $\|M\|_F = \sqrt{\tr(MM^T)}$.\\
	\end{thm}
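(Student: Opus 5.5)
The plan is to reduce to the case $\Sigma_1 = I$ and then compare the two Gaussian densities coordinate by coordinate. Set $M = \Sigma_1^{-1/2}\Sigma_2\Sigma_1^{-1/2}$. The statement is only meaningful when $\Sigma_1$ is invertible, so we assume it. Since total variation is invariant under the bijective linear map $x\mapsto \Sigma_1^{-1/2}x$, we have
\[
\|\cN(0,\Sigma_1)-\cN(0,\Sigma_2)\|_{TV} = \|\cN(0,I)-\cN(0,M)\|_{TV}.
\]
Next we diagonalize $M = O D O^T$ with $O$ orthogonal and $D = \mathrm{diag}(d_1,\dots,d_k)$, $d_j\geq 0$. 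Applying the rotation $O^T$, which is again a bijection preserving TV and fixing $\cN(0,I)$, reduces the problem to comparing two product measures $\bigotimes_j \cN(0,1)$ and $\bigotimes_j\cN(0,d_j)$. The Frobenius norm is also rotation invariant, so $\|M-I\|_F^2 = \sum_j (d_j-1)^2$.

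First, if some $|d_j-1|$ is large, say $\|M-I\|_F\geq 1/2$, the bound holds trivially with $C\geq 2$, since TV is at most $1$. We may therefore assume every $d_j\in[1/2,3/2]$. In this regime the measures are mutually absolutely continuous. We pass to the Hellinger distance, using that TV is bounded by $\sqrt{2}\,H$ and that Hellinger affinity is multiplicative over products:
\[
1-H^2(\mu,\nu) = \prod_j \rho_j,\qquad \rho_j = \PAR{\frac{2\sqrt{d_j}}{1+d_j}}^{1/2},
\]
which is the explicit one-dimensional Gaussian Bhattacharyya coefficient. A Taylor expansion around $d=1$ gives $-\log\rho_j \leq c\,(d_j-1)^2$ uniformly for $d_j\in[1/2,3/2]$. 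Hence
\[
H^2 \leq 1-e^{-c\sum_j(d_j-1)^2}\leq c\|M-I\|_F^2,
\]
and therefore $\|\cdot\|_{TV}\leq \sqrt{2c}\,\|M-I\|_F$. Taking $C = \max(2,\sqrt{2c})$ concludes the argument.

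The main point needing care is the uniform second-order bound on $-\log\rho_j$. We write $d = 1+u$; then
\[
-\log\rho = \tfrac12\log\frac{2+u}{2} - \tfrac14\log(1+u),
\]
whose first derivative vanishes at $u=0$ and whose second derivative is bounded on $|u|\leq 1/2$. An alternative route would bound the KL divergence by $\frac12\sum_j(d_j-1-\log d_j)\leq c'\sum_j(d_j-1)^2$ and then apply Pinsker's inequality; either route gives the claim.
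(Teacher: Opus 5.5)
Your proof is correct and complete, and since the paper gives no proof of this statement, it takes a different route from the paper. The whitening by $\Sigma_1^{-1/2}$ and the rotation by $O^T$ reduce everything to $\bigotimes_j\cN(0,1)$ versus $\bigotimes_j\cN(0,d_j)$. The trivial case $\|M-I\|_F\geq 1/2$ absorbs every configuration with some $d_j\notin(1/2,3/2)$, including $d_j=0$, i.e.\ singular $\Sigma_2$. On the remaining range, the one-dimensional Bhattacharyya coefficient, the estimate $-\log\rho\leq \tfrac12 u^2$ (because $f''(u)=\frac{1}{4(1+u)^2}-\frac{1}{2(2+u)^2}\leq 1$ on $|u|\leq 1/2$), $1-e^{-x}\leq x$, and $\|\cdot\|_{TV}\leq\sqrt{2}\,H$ give the claim; the constant $C=2$ works.

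The paper itself only quotes the statement from Devroye--Mehrabian--Reddad, with Arbas et al.\ cited for the general case. The upper bound in the first reference is obtained along the lines of your alternative route: the closed-form KL divergence $\frac12\sum_j(d_j-1-\log d_j)$, then Pinsker, plus a trivial case when some eigenvalue is far from $1$. Your Hellinger version is an equally short, self-contained replacement, with multiplicativity of the affinity doing the work that additivity of KL does there.

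What the citations add is a matching lower bound, $\|\cN(0,\Sigma_1)-\cN(0,\Sigma_2)\|_{TV}\geq c\min(1,\|M-I\|_F)$, and a version for Gaussians with different means. Neither is needed here, since the paper only uses the upper bound for equal means.

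Keep your explicit remark that only $\Sigma_1$ needs to be invertible. In the paper's application, $\Sigma_1=g$ while $\Sigma_2=\Sigma_t$ can be degenerate, for instance before every site has been updated. Your case split is exactly what covers that.
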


	\par{}When conditioning on the clocks $\cP$, $\overline{h}_t$ is a multivariate Gaussian with same covariance as $\Gamma$ but mean $\mu_t$. Using the Proposition \ref{prop:1} and the fact that the total variation distance increases under conditioning we have already proven
	\[
	\|\cN(h_0,g)\scrP_t-\Gamma\|_{TV} \leq \e\SBRA{\erf\PAR{\frac{\sqrt{\cE(\mu_t)}}{2\sqrt{2}}}}.
	\]
	Since $\erf$ is linear near $0$, this indicates that the mixing is governed by the norm $\sqrt{\cE(\mu_t)}$. \\
	\par{} The next step is to control the distance between $h_t$ and $\overline{h}_t$. This distance can be bounded by a quantity that is independent of $h_0$. Indeed when conditioning on $\cP$, these processes are Gaussian vectors with same mean but different covariance, one being $\lapl^{-1}$ and the other $\Sigma_t$. Using the above Theorem \ref{thm:gauss} to bound the distance between $h_t$ and $\overline{h_t}$, we get
	\begin{align*}
		\|\delta_{h_0}\scrP_t-\Gamma\|_{TV} &\leq \|\delta_{h_0}\scrP_t-\cN(h_0,g)\scrP_t\|_{TV}+\|\cN(h_0,g)\scrP_t-\Gamma\|_{TV}\\
		&\leq C\e[\|\lapl^{1/2}\Sigma_t\lapl^{1/2}-I_V\|_F]+\e[\erf(\sqrt{\cE(\mu_t)}/2\sqrt{2})].
	\end{align*}
	From this bound we see that it suffices to study the convergence of the two quantities $(\mu_t)_{t\geq 0}$ and $(\Sigma_t)_{t\geq0}$ to show that $h_t$ is close to equilibrium.\\	
	\par{}One observation is that the first term does not depend on the initial condition and is essentially the total variation distance between the Glauber dynamics started at zero and the equilibrium. This is why we needed to consider initial conditions that are large enough to mix slower than the evolution started at zero. In Theorem \ref{thm:2} we show a general condition for cutoff for the noised process $(\bar{h}_t)_{t\geq 0}$ (starting from super-harmonic functions). For any graph that meets the condition of the theorem, to prove cutoff for the initial process $(h_t)_{t\geq 0}$ it suffices to show that the evolution started from zero mixes faster than that of large super-harmonic functions.
	\subsection{Backward random walk representation}\label{subsec:RW}
	Our final tool for the proof is a useful random walk representation for our quantities of interest $(\mu_t)_{t\geq 0}$ and $(\Sigma_t)_{t\geq 0}$. This representation is borrowed from \cite{gangulyCutoffGlauberDynamics2023} but is also known as the \textit{fragmentation process} or \textit{dual process} in opinion dynamics, see \cite{gantertAveragingProcessInfinite2024, elboimAsynchronousDeGrootDynamics2024, elboimEdgeaveragingProcessGraphs2025}. For completion's sake and more importantly to explicit our construction, we will re-explain the representation here. The main idea is to construct a random walk that is compatible with our evolution.
	\begin{defi}[Construction of the walk]
		Let $\Gamma$ be an arbitrary DGFF as in Subsection \ref{subsec:dgff}. Let $\cP = \{(x_1,t_1),\dots,(x_i,t_i),\dots\}$ be a Poisson point process on $V\times\R_+$, ordered by time. \\
		Let $(S_k)_{k\geq 0}$ be the discrete random walk on the graph with absorption at the boundary (independent of $\cP$). In other words, the transition kernel of $(S_k)_{k\geq 0}$ is given by
		\[
		\forall k\geq 0,~\forall x,y\in V,~\p(S_{k+1} = y|S_k =x)  = P(x,y),
		\]
		and $S_{k+1}$ is killed with probability $1-\sum_{y\in V}P(x,y)$.\\
		The \textit{(continuous) killed random walk with clocks} $\cP$ is denoted by $(X_t)_{t\geq 0}$. Its construction is as follows:
		\begin{itemize}
			\item Set $T_0 = 0$, and $X_{T_0} = Y_0$. The sequence $(T_i)_{i\geq 0}$ are the times at which the walk will jump.
			\item Set $T_{i+1} = \inf\BRA{t_j > T_i | x_j = X_{T_i}}$, the next time a clock rings at the location of the walk.
			\item Let $X_t$ be constant on the interval $[T_i,T_{i+1}[$ and set
			\[
			X_{T_{i+1}} := S_{i+1}.
			\]
		\end{itemize}  
		This construction ensures that $(X_t)_{t\geq 0}$ only jumps at the time and place a clock rang.
	\end{defi}
	\begin{figure}[!h]
	\centering
	\includegraphics[width=\linewidth]{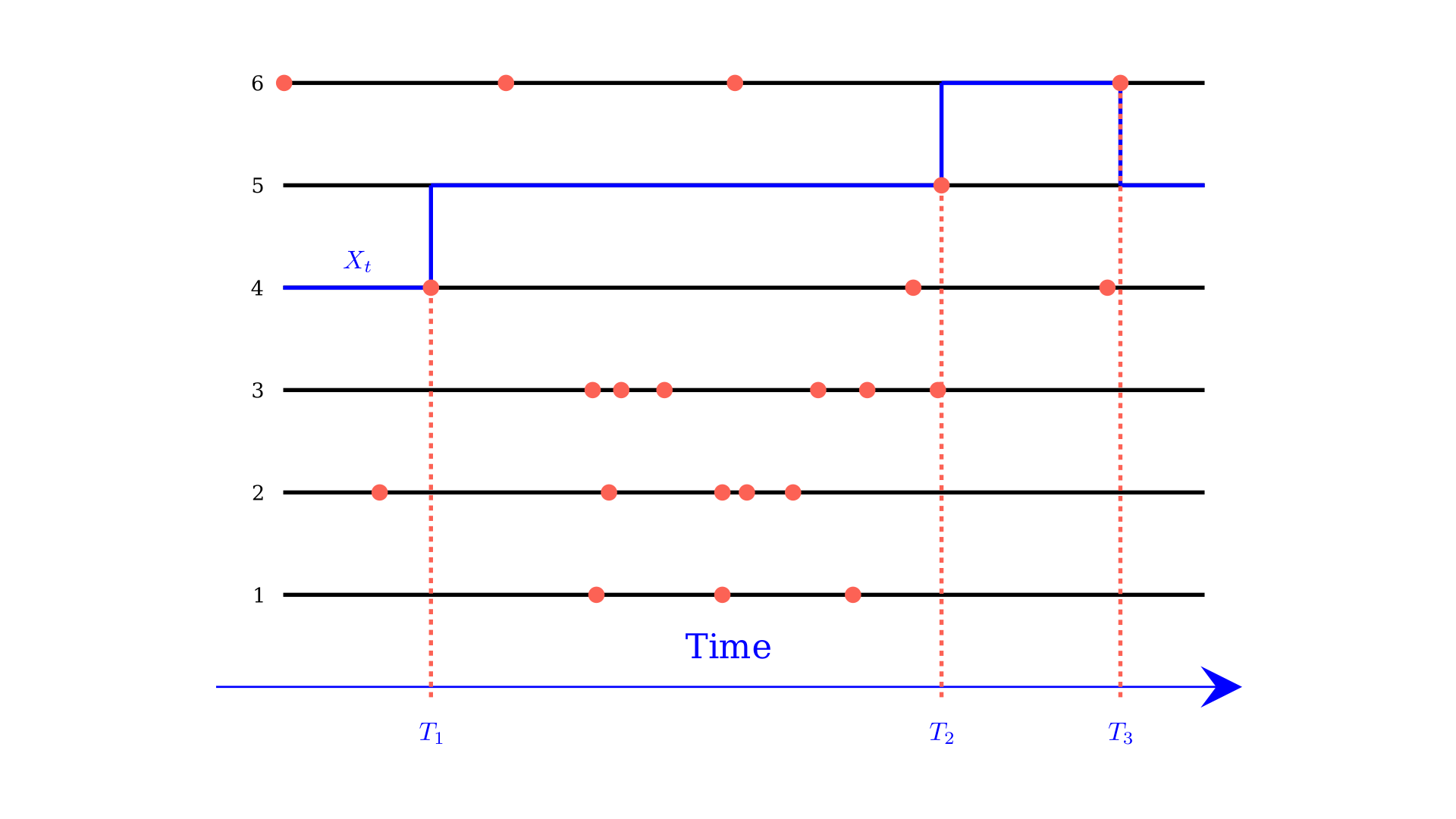}
	\caption{Random walk on 6 sites.}
	\end{figure}
	\par{}Denote $(S_k^x)_{k\geq 0}$ and $(X^x_t)_{t\geq0}$ the respective walks initialized at $x\in V$. Note that when averaging on $\cP$, $(X_t)_{t\geq0}$ is the usual continuous time random walk killed when entering $\partial V$. We also denote $\tau_\dagger$ the lifetime of $(X_t)_{t\geq 0}$.
	\begin{defi}[Backward random walk]
		If $\cP = \{(x_1,t_1), \dots, (x_i,t_i),\dots\}$. Let $\iota_t$ be the number of clocks that rang before time $t$, that is $\iota_t(\cP) =  \max\{i\geq 0 | t_i \leq t\}$. Set
		\[
		\theta_t\cP := \{(x_{\iota_t}, t-t_{\iota_t}), (x_{\iota_t-1}, t-t_{\iota_t-1}),\dots, (x_1, t- t_1)\},
		\]
		the time reversal of $\cP$.
		\par{}Note that $\theta_t$ is a measure-preserving transformation for the finite time horizon Poisson process. Since $\theta_t$ reverses the order of clocks, we call the killed random walk \textit{with clock $\theta_t\cP$} the \textit{backward random walk}.
	\end{defi} 
	\begin{pro}[Backward random walk representation]\label{prop:repr}
		Let $\Gamma$ be an arbitrary DGFF as in Subsection \ref{subsec:dgff}. Fix a time $t\geq 0$. Let $(\mu_t)_{t\geq 0}$ and $(\Sigma_t)_{t\geq 0}$ be as defined in Subsection \ref{subsec:decomp}. \\
		For any $x,y\in V$, the entries of $\mu_t$ and $\Sigma_t$ are given by
		\begin{align*}
			\mu_t(x) &= \e[\mu_0(X_t^x)\IND_{\{\tau_\dagger > t\}} | \cP = \theta_t\cP_0],\\
			\Sigma_t(x,y) &= g(x,y) - \pi(y)\e\SBRA{g(X_t^x,\tilde{X_t}^y)\IND_{\{\tau_\dagger > t\}} |\cP = \theta_t\cP_0},
		\end{align*} 
		where $(X^x,\tilde{X}^y)$ are independent continuous killed walks with the same clocks $\cP$. In particular,
		\begin{align*}
			\e[\mu_t] &= e^{-t\lapl}\mu_0,\\
			\e[\Sigma_t] &= g-  e^{-2t\lapl}g \pi^T.
		\end{align*}
	\end{pro}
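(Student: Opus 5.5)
The plan is to unroll the recursive definitions of $\mu_t$ and $\sigma_t$ along the clock rings, and then to recognise the products of one-site update operators as transition operators of the walk driven by the time-reversed clocks. Fix $\cP_0$ and a time $t$, and let $(x_1,t_1),\dots,(x_k,t_k)$ be the rings before $t$, so $k=\iota_t$. Define the one-site update operator $A_x f = f - \ANG{\lapl f, e_x}e_x$. It replaces $f(x)$ by $Pf(x)$ and leaves the other coordinates unchanged. With this notation $\mu_t = A_{x_k}\cdots A_{x_1}\mu_0$. For the noise process we have $\sigma_{t_i} = A_{x_i}\sigma_{t_{i-1}} + Z_i\pi(x_i)^{-1/2}e_{x_i}$, so $\sigma_t = \sum_{i=1}^k A_{x_k}\cdots A_{x_{i+1}} Z_i\pi(x_i)^{-1/2}e_{x_i}$.

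\textbf{Mean.} Each $A_x$ is a sub-stochastic kernel: a walker at $x$ moves according to $P$ and is killed on $\partial V$, while a walker at any other site stays put. Then $(A_{x_k}\cdots A_{x_1}f)(x)$ is the expectation of $f$ evaluated at the end of a walk that starts at $x$ and applies the kernels in the order $A_{x_k}$, then $A_{x_{k-1}}$, and so on, killed if it is absorbed. In this walk the first ring used is $x_k$ at time $t_k$, which under $\theta_t$ becomes the ring at time $t-t_k$. Hence the walk jumps exactly when a clock of $\theta_t\cP_0$ rings at its current location, and jumps according to $P$. This is precisely $X^x_t$ with clocks $\theta_t\cP_0$, which gives the formula for $\mu_t(x)$. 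To be rigorous I would argue by induction on $k$, peeling off the last ring.

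\textbf{Covariance.} For the covariance I would not expand the sum directly. Instead I would use the stationary coupling from the preceding Lemma. Start $\overline\sigma_0\sim\Gamma$ and run the same clocks and noise. By linearity $\overline\sigma_t = \sigma_t + A_{x_k}\cdots A_{x_1}\overline\sigma_0$, and the two summands are independent. Conditionally on $\cP$, $\overline\sigma_t\sim\Gamma$ by that Lemma, so
\[\Cov(\sigma_t) = \Cov(\overline\sigma_t) - M\,\Cov(\overline\sigma_0)\,M^T, \qquad M=A_{x_k}\cdots A_{x_1}.\]
Write $M(x,z)=\p(X^x_t=z,\tau_\dagger>t)$ from the mean step, and recall that the Euclidean covariance of $\Gamma$ is $g(u,v)/\pi(v)$. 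Then the second term at $(x,y)$ equals $\e[g(X^x_t,\tilde X^y_t)/\pi(\tilde X^y_t)]$, with two independent walks sharing the clocks $\theta_t\cP_0$. Multiplying by $\pi(y)$ gives $\Sigma_t$.

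The main obstacle is matching this exactly to the stated normalisation $\pi(y)g(X,\tilde X)$. By reversibility $g(u,v)/\pi(v)=g(v,u)/\pi(u)$. The cleanest route seems to be to reverse the $\tilde X^y$ walk via $\pi(y)M(y,v)=\pi(v)M^*(v,y)$, where $M^*$ is the adjoint of $M$ in $\ell^2(\pi)$; the kernel $P$ is $\pi$-symmetric, so each $A_x$ is $\pi$-self-adjoint up to the order reversal. I expect some care here with the placement of $\pi$ factors, and I would settle it with a small check, for instance on a single site.

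\textbf{Expectations.} Averaging over $\cP_0$, and using that $\theta_t$ preserves the law of the Poisson process on $[0,t]$, $X^x_t$ becomes the continuous-time killed walk with generator $-\lapl$, so $\e[\mu_t]=e^{-t\lapl}\mu_0$. For $\Sigma_t$, the two walks share clocks and are therefore not independent after averaging. However, they move independently whenever they are at distinct sites, and when they coincide, a ring at their common site makes both jump independently. Since each jump happens at rate $1$ per walker, the pair behaves as two independent rate-$1$ walks and $\e[g(X_t,\tilde X_t)]$ yields $e^{-t\lapl}g\,e^{-t\lapl}$. Using that $g$ commutes with $\lapl$, this becomes $e^{-2t\lapl}g$, together with the factor $\pi^T$ from the normalisation.
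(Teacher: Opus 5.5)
\textbf{There is a genuine gap: your last step, the averaged covariance identity, uses a false independence claim, and the identity itself is false.} Your treatment of the mean is the paper's own argument (induction on $\iota_t$, peeling off the last ring, which is the first ring the backward walk sees), and $\e[\mu_t]=e^{-t\lapl}\mu_0$ is fine.

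The claim that two walks sharing the clocks ``behave as two independent rate-$1$ walks'' does not follow from each walk being marginally rate $1$. When the two walks sit on the same vertex, a single ring moves both of them at once, whereas independent walks would jump at two separate times with total rate $2$. Hence $\e[g(X^x_t,\tilde X^y_t)\IND_{\{\tau_\dagger>t\}}]$ is not $(e^{-t\lapl}ge^{-t\lapl})(x,y)$. The target $\e[\Sigma_t]=g-e^{-2t\lapl}g\pi^T$ is itself false, so this step cannot be repaired. Take one interior vertex whose neighbours all lie in $\partial V$. Then $P=0$ on $V$ and $\lapl=g=\pi=1$; $\sigma_t$ is $0$ before the first ring and $\cN(0,1)$ after it, so $\e[\Sigma_t]=1-e^{-t}$, not $1-e^{-2t}$. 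On any graph with $\pi\equiv1$ the discrepancy is already visible at first order: only a ring at $x$ creates variance at $x$, so $\e[\Sigma_t(x,x)]=t+O(t^2)$, whereas $(g-e^{-2t\lapl}g)(x,x)=2t+O(t^2)$. What does hold is that $G_t:=g-\e[\Sigma_t]$ solves $\frac{d}{dt}G_t=-\lapl G_t-G_t\lapl+\mathrm{diag}(\lapl G_t\lapl)$ with $G_0=g$, where $\mathrm{diag}$ keeps only the diagonal entries. That correction comes from the simultaneous jumps, and it is exactly what your heuristic drops. The paper's proof does not address the averaged identities at all.

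\textbf{Your conditional covariance formula is correct; keep it rather than converting it.} Your stationary-coupling computation of $\Cov(\sigma_t)$ is valid. It is a genuinely different route from the paper's, which recomputes the $x_i$-th row and column of $\Sigma$ entry by entry inside the same induction; the coupling avoids that bookkeeping and gets the $\pi$-weights right automatically. What it proves is $\Sigma_t(x,y)=g(x,y)-\pi(y)\e[g(X^x_t,\tilde X^y_t)\pi(\tilde X^y_t)^{-1}\IND_{\{\tau_\dagger>t\}}\mid\cP=\theta_t\cP_0]$.

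Your proposed conversion relies on each $A_x$ being $\pi$-self-adjoint, which is false. Its adjoint in $\ell^2(\pi)$ is $f\mapsto f-f(x)\lapl e_x$, which sets the value at $x$ to $0$ and adds $P(z,x)f(x)$ at each $z\neq x$. So it is not a walk kernel, and $MgM^*$ is not a two-walk expectation.

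No conversion can exist anyway. If no clock rings before $t$ then $\Sigma_t=0$, while the stated right-hand side is $(1-\pi(y))g(x,y)$. So the statement's normalisation is right only under the convention $\pi\equiv1$ of Theorem~\ref{thm:1}. The paper's inductive step uses only the relation ``walk expectation $=(g-\Sigma)/\pi(y)$'', so it holds verbatim for your version. It is the base case, asserted there without computation, that singles out your normalisation.

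The factor $\pi^T$ in your last sentence also has no source. With your normalisation and $\pi(y)p_t(y,v)=\pi(v)p_t(v,y)$ for $p_t=e^{-t\lapl}$, all $\pi$-weights would cancel even for independent walks.
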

	In the following, for clarity, we shall consider only forward random walks since we are only interested in average quantities and the distribution of $\theta_t\cP$ is the same as $\cP$ before any fixed time horizon $t$.\\
	\par{} This proposition indicates that if we where to believe some kind of concentration for $\mu_t$ then $\|\delta_{h}\scrP_t-\Gamma\|_{TV}$ should be up to constant equal to $e^{-\lambda t}\sqrt{\cE(h)}$, where $\lambda$ is the spectral gap of the Laplacian. This would result in a mixing time of order 
	\[
	\tmix = \frac{1}{2\lambda}\log \cE(h).
	\]
	To show such a result we need to specialize to $h_0$ super-harmonic. As we will see, this guarantees some strong monotonicity all along the evolution.
	\par{} On the other hand to deal we the covariance we first set 
	\[
	g_t(x,y) :=\pi(y)\e[g(X_t^x,\tilde{X_t}^y)\IND_{\{\tau_\dagger > t\}} |\cP = \cP_0],
	\]
	so that $\|\lapl^{1/2}\Sigma_t\lapl^{1/2}-I_V\|_F$ simplifies to $\|\lapl g_t\|_F$. This indicates that mixing started from 0 is essentially governed by the quantity $\e[\|\lapl g_t\|_F]$.
	\begin{proof}
		Let us first illustrate the graphical construction. Let $t_{i-1}$ be the last time a clock rings during our arbitrary time horizon $t_{i}$. Let us explain how to translate the changes in $\mu_{t_{i-1}}$ to $\mu_{t_i}$ in a jump for the backward random walk. \\
		We first consider any site $x$ that is not the location of the last clock (here $x=3$). There is no change between the values of $\mu_{t_{i-1}}(x)$ and $\mu_{t_i}(x)$. Furthermore a backward random walk started from $x$ cannot jump in the time interval $[0,t_{i}-t_{i-1}]$ since no clocks have rang along its path. Here we can conclude that deterministically $\mu_{t_i}(x) = \mu_{t_{i-1}}(X^x_{t_i-t_{i-1}})$.
		\begin{figure}[!h]
			\centering
			\includegraphics[width=0.9\textwidth]{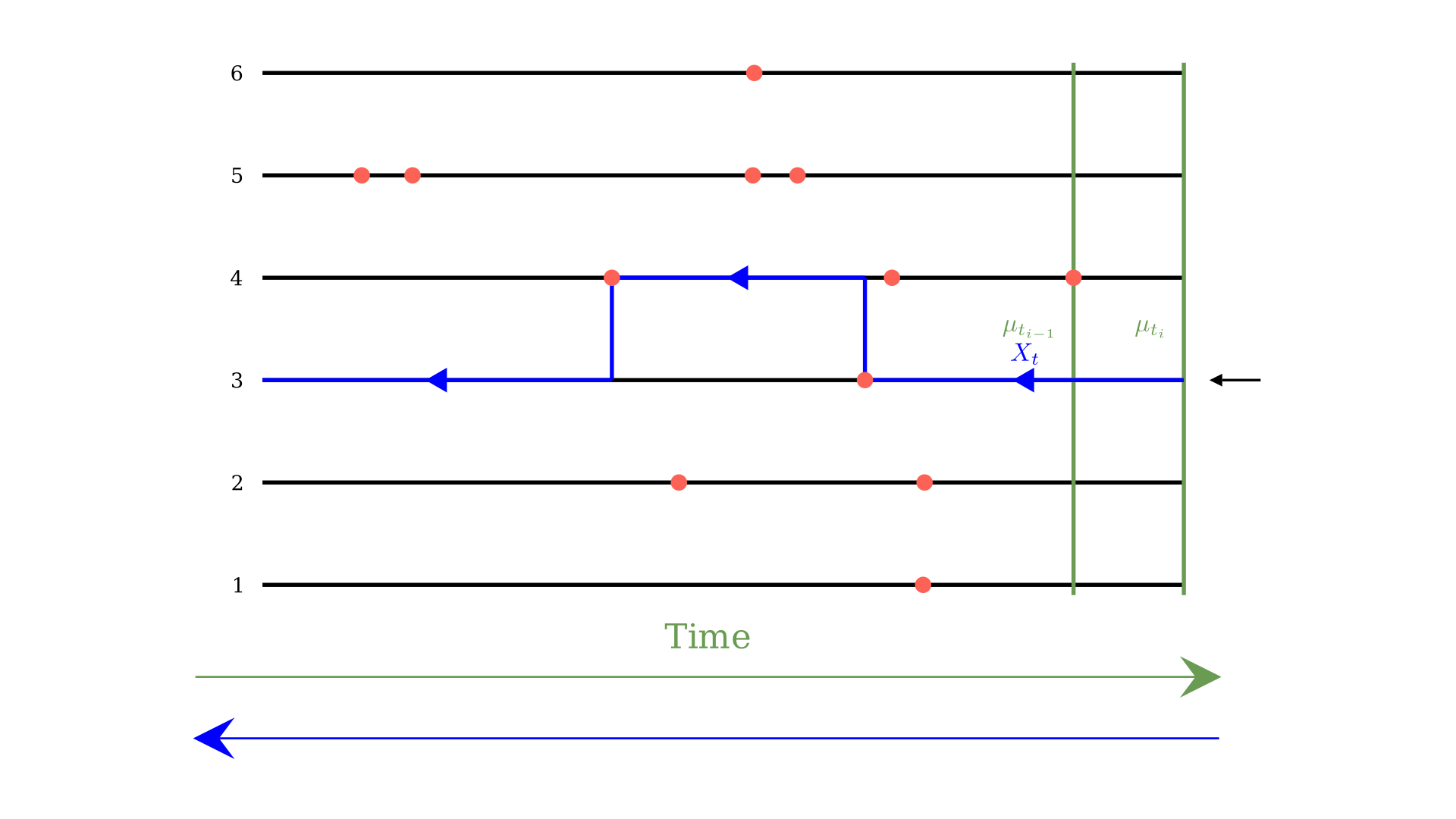}
		\end{figure}\newpage
		Next, we choose the site $x$ that is exactly the location of the last clock (here $x=4$). In this case the value of $\mu_{t_{i}}(x)$ is the average of $\mu_{t_{i-1}}$ over the neighborhood of $x$. The walk cannot jump in the time interval $[0,t_{i}-t_{i-1}[$, but will jump exactly at time $t_{i-1}-t_{i-1}$. Since the walk jumps randomly to a neighbor of $x$ we still have $\mu_{t_i}(x) = \e[\mu_{t_{i-1}}(X^x_{(t_i-t_{i-1})^+})]$. Continuing the recursion we can argue that $\mu_{t_i}(x) = \e[\mu_0(X_{t_i}^x)]$.
		\begin{figure}[!h]
			\centering
			\includegraphics[width=0.9\textwidth]{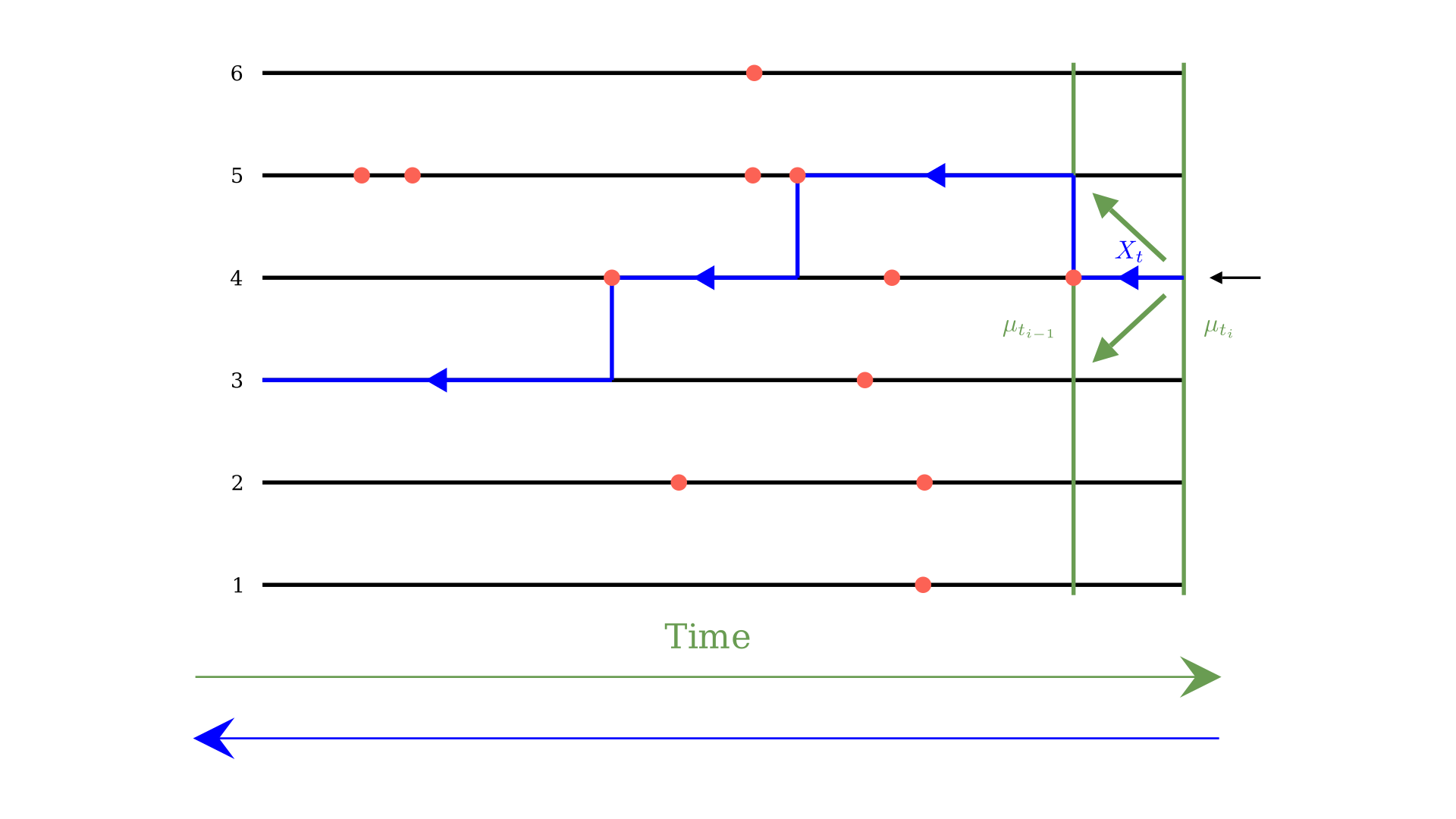}
		\end{figure}
		\par{}For the formal proof we use a recursion on $\iota_t$, the total number of clocks that rang before our time horizon $t$. If $\iota_t = 0$ then $\mu,G$ and $X$ are all constant, so the proposition is true.
		\paragraph{Mean} Let $i = \iota_t$, such that $\mu_t = \mu_{t_i}$. Take $x\in V$. \\
		First if $x\neq x_i$, the entry is unchanged, so $\mu_{t_i}(x) = \mu_{t_{i-1}}(x)$. As explained above, the key feature of this backward transformation is that the walk started at $x$ will not jump on the time interval $[0,t-t_{i-1}[$. Thus, if we remove the last point $(x_i,t_i)$ from $\cP_0$ both $\mu_t(x)$ and $\e[\mu_0(X_t^x)\IND_{\{\tau_\dagger > t\}} | \cP = \theta_t\cP_0]$ will remain unchanged. \\
		Using the inductive hypothesis on $\mu_{t_{i-1}}(x)$, we have
		\begin{align*}
			\mu_{t_i}(x) = \mu_{t_{i-1}}(x) &= \e[\mu_0(X_t^x)\IND_{\{\tau_\dagger > t\}} | \cP = \theta_t(\cP_0\setminus\{(x_i,t_i)\})] \\
			&= \e[\mu_0(X_t^x)\IND_{\{\tau_\dagger > t\}} | \cP = \theta_t\cP_0].
		\end{align*}
		If $x = x_i$ then the entry is an average of its neighbors, that is
		\[
		\mu_{t_i}(x_i) =  P\mu_{t_{i-1}}(x_i).
		\]
		On the other hand $X^{x_i}$ will jump at time $t-t_i$ at some location $y\neq x_i$ according to $P(x,\cdot)$. However any other walks started at $y\neq x_i$ do not jump before time $t-t_i$. They are therefore unchanged if we remove $(x_i,t_i)$ from $\cP_0$. In conclusion 
		\[
		\e[\mu_0(X_t^{x_i})\IND_{\{\tau_\dagger > t\}} | \cP = \theta_t\cP_0]
		= \sum_{y\in V}P(x_i, y)\e[\mu_0(X_t^y)\IND_{\{\tau_\dagger > t\}} | \cP = \theta_t(\cP_0\setminus\{(x_i,t_i)\})].
		\]
		We can conclude by using the inductive hypothesis on $\cP_0\setminus\{(x_i,t_i)\}$.
		\paragraph{Covariance matrix}Again set $\iota_t = i$. Only the $x_i$-th row/column of the covariance matrix of $\sigma_{t_i}$ differs from $\sigma_{t_{i-1}}$. The idea to deal with the other entries of $\Sigma_{t_i}$ is the same as what was presented for the mean.\\
		Consider the $x_i$-th row of $\Sigma_{t_i}$. Recall the reversibility formula
		\[ \forall x,y\in V,~ \pi(x)P(x,y) = \pi(y)P(y,x). \] 
		We will be using this formula extensively in the following.
		\begin{itemize}
			\item First, for the diagonal we have
			\begin{align*}
				\Sigma_{t_i}(x_i,x_i) &= \frac{\pi(x_i)}{\pi(x_i)} +\pi(x_i)\Cov(P\sigma_{t_{i-1}}(x_i), P\sigma_{t_{i-1}}(x_i))\\
				&=1+\sum_{x,y\in V}P(x_i,x)\Sigma_{t_{i-1}}(x,y)\frac{\pi(x_i)}{\pi(y)}P(x_i, y)\\
				&= 1 + [P\Sigma_{t_{i-1}}P](x_i,x_i).
			\end{align*}
			By definition, both $X^{x_i}$ and $\tilde{X}^{x_i}$ will jump independently at time $t_i$. Using the inductive hypothesis on $\cP_0\setminus\{(x_i,t_i)\}$, we have
			\begin{align*}
				-\e[g(X_t^{x_i},\tilde{X_t}^{x_i})\IND_{\{\tau_\dagger > t\}} |\cP = \theta_t\cP_0] &= -\sum_{x,y\neq x_i}\e[g(X_t^x,\tilde{X_t}^y)\IND_{\{\tau_\dagger > t\}} |\cP =  \theta_t(\cP_0\setminus\{(x_i,t_i)\})]P(x_i, x)P(x_i, y)\\
				&= -\sum_{x,y\in V}P(x_i, x)[g(x,y)-\Sigma_{t_{i-1}}(x,y)]\frac{1}{\pi(y)}P(x_i, y)\\
				&=\frac{1}{\pi(x_i)}\SBRA{P\Sigma_{t_{i-1}}P(x_i,x_i)-PgP(x_i, x_i)}.
			\end{align*}
			We can conclude by expanding $PgP(x_i,x_i) = (I_V-\lapl)g(I_V-\lapl)(x_i,x_i) = g(x_i,x_i)-1$.
			\item Next, for the off-diagonal entries, take $x\neq x_i$ :
			\[
			 \pi(x)\Sigma_{t_i}(x,x_i) =\pi(x_i)\Sigma_{t_i}(x_i,x) = \pi(x)\pi(x_i)\Cov(\sigma_{t_{i-1}}(x), P\sigma_{t_{i-1}}(x_i)) = \pi(x)[\Sigma_{t_{i-1}}P](x,x_i).
			\]
			Recall that $X^{x}$ cannot jump at time $t_i$ so only $X^{x_i}$ jumps. Again using the inductive hypothesis we conclude
			\begin{align*}
				-\e[g(X_t^{x},\tilde{X_t}^{x_i})\IND_{\{\tau_\dagger > t\}} |\cP = \theta_t\cP_0] &=-\sum_{y\neq x_i}\e[g(X_t^x,\tilde{X_t}^y)\IND_{\{\tau_\dagger > t\}} |\cP =  \theta_t(\cP_0\setminus\{(x_i,t_i)\})]P(x_i,y)\\
				&= -\sum_{y\in V}[g(x,y)-\Sigma_{t_{i-1}}(x,y)]\frac{1}{\pi(y)}P(x_i, y)\\
				&=\frac{1}{\pi(x_i)}\SBRA{\Sigma_{t_{i-1}}P(x,x_i)-gP(x,x_i)}.
			\end{align*}
			We can conclude by expanding $gP(x,x_i) = g(I_V-\lapl)(x,x_i) = g(x,x_i)$.
		\end{itemize}
		In any case for all $x,y\in V$:
		\[
		\Sigma_{t_i}(x,y) = g(x,y) -\pi(y)\e[g(X_t^{x},\tilde{X_t}^{x_i})\IND_{\{\tau_\dagger > t\}} |\cP = \theta_t\cP_0],
		\]
		which finishes the induction.
	\end{proof}

	\section{Monotone evolution for super-harmonic functions}\label{sec:mean}
	In this section we shall study the first of our quantities of interest, the asynchronous DeGroot dynamics $(\mu_t)_{t\geq 0}$ defined in Subsection \ref{subsec:decomp}. Our starting point is the backward random walk representation. The complex correlations between the number of jumps the walk does and the path it takes makes the analysis particularly difficult, but in the case of super-harmonic functions we can use monotonicity to heavily simplify the question.
	
	\subsection{Super-martingales}\label{subsec:mart}
	For this subsection we fix a realization of clocks $\cP_0 := \{(x_1,t_1), \dots, (x_i,t_i),\dots\}$. Let $(X_t)_{t\geq 0}$ be the \textit{killed random walk with clocks }$\cP_0$ defined in Subsection \ref{subsec:RW}. \\
	\par{}The \textit{path} of the walk is the discrete random walk $(S_k)_{k\geq 0}$ defined by $S_k = X_{t_k}$ for all $k\geq 0$, where we recall that the $(t_k)_{k\geq 0}$ are the times a clock rings on the graph. As is the case with the continuous walk, we denote $(S_k^x)_{k\geq 0}$ the path started at $x\in V$. Note that $S$ does not jump at each step. Indeed $S_k$ only jumps if $X$ jumps at time $t_k$, which is to say $(X_{t_{k-1}},t_k) \in \cP_0$. 
	\begin{lem}\label{lem:law}
		Conditionally on $\cP=\cP_0$, $(S_k)_{k\geq 0}$ is a time in-homogeneous Markov chain. Its transition kernel is given by
		\[
		\p(S_{k+1} = z | S_k = y, S_{k-1}, \dots, S_0) = \IND_{z = y}\IND_{y\neq x_{k+1}}+P(y,z)\IND_{y=x_{k+1}}.
		\]
	\end{lem}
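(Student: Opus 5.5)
The plan is to read the claim directly off the construction of the killed random walk with clocks $\cP_0$ from Subsection \ref{subsec:RW}, using that $\cP_0$ is fixed and that the path $(S_k)$ of the underlying discrete walk is independent of the clocks. Concretely, I would introduce the jump counter $N_k = \#\{j\leq k : X \text{ jumps at time } t_j\}$. This is the number of indices $j\leq k$ with $x_j = X_{t_{j-1}}$. With this notation, $X_{t_k}$ equals the $N_k$-th step of the underlying discrete walk; to avoid a clash of names I write $(W_m)_{m\geq 0}$ for that underlying walk with kernel $P$. The killed state is treated as a cemetery absorbed forever, so that sub-stochasticity of $P$ simply accounts for the missing mass.

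First step: given $\cP=\cP_0$, the whole sequence $(N_k)$ is a deterministic function of $(S_0,\dots,S_k)$, since $N_{k+1}=N_k+\IND_{S_k = x_{k+1}}$. By definition of $T_{i+1}$, the walk jumps at $t_{k+1}$ exactly when the clock $x_{k+1}$ rings at its current location. Hence, on the event $S_k=y$, either $y\neq x_{k+1}$, in which case no jump occurs and $S_{k+1}=y$ deterministically, or $y=x_{k+1}$, in which case $S_{k+1}=W_{N_k+1}$.

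Second step: in the jump case, I compute the conditional law of $W_{N_k+1}$ given $(S_0,\dots,S_k)$. The event $\{S_0=s_0,\dots,S_k=s_k\}$ determines $n=N_k$ and coincides with an event of the form $\{W_0=w_0,\dots,W_n=w_n\}$, where $w_0,\dots,w_n$ are the distinct successive values visited; the $\cP_0$-dependent part is deterministic. The Markov property of $W$ at the deterministic time $n$ (deterministic on this event) then gives $\p(W_{n+1}=z\mid\cdot)=P(w_n,z)=P(y,z)$. Since $P$ has zero diagonal, the two cases do not overlap in a way that would conflict with the indicator formula. Combining the two cases gives exactly the kernel $\IND_{z=y}\IND_{y\neq x_{k+1}}+P(y,z)\IND_{y=x_{k+1}}$, which depends on $k$ only through $x_{k+1}$; this is the time inhomogeneity.

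The only delicate point is the second step: justifying that conditioning on the observed path $(S_j)_{j\leq k}$ amounts to conditioning on an initial segment of $W$ of deterministic length. I would handle this by writing the event explicitly as a finite intersection and summing over the possible values of $n$. The rest is bookkeeping.
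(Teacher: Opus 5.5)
Your proposal is correct and follows the same route as the paper: you split on whether the clock ringing at $t_{k+1}$ sits at the current position $S_k$, and in the jump case you invoke the Markov property of the underlying discrete walk. Your identification of $\{S_0=s_0,\dots,S_k=s_k\}$ with an initial segment $\{W_0=w_0,\dots,W_n=w_n\}$ of deterministic length $n=N_k$ is exactly the justification the paper compresses into one line, and your appeal to the zero diagonal of $P$ is not needed for the formula itself.
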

	\begin{proof}
		Let $y\in V$, such that $S_k = X_{t_k}= y$. If $y\neq x_{k+1}$ then $X$ will not jump at time $t_{k+1}$ so $S_{k+1} = S_k$.\\
		Else we have $y=x_{k+1}$, and by definition the walk $X$ will do one jump of the discrete walk. By the Markov property for the discrete walk, the result of the jump depends only on the current location but not the entire history. We conclude that
		\[
		\p(S_{k+1} = z | S_k = x_{k+1}, S_{k-1}, \dots, S_0) = \p(X_{t_{k+1}} = z|X_{t_k} = x_{k+1}) =  P(x_{k+1},z).
		\]
	\end{proof}
	Let $\cF = (\sigma(S_0,\dots,S_k))_{k\geq 0}$ be the natural filtration of the process $(S_k)_{k\geq0}$.
	\begin{lem}\label{lem:mart}
		Let $h : V\to\R$ be a super-harmonic function. Conditionally on $\cP = \cP_0$,
		$(h(S_k))_{k\geq 0}$ is a $\cF$-super-martingale.
	\end{lem}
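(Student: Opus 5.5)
We verify the super-martingale inequality directly from the transition kernel of Lemma \ref{lem:law}. Two points need care. First, we must fix a convention for the killed walk: once killed, $S_k$ sits in a cemetery state $\dagger$ with $h(\dagger):=0$. This matches the zero boundary condition and the factor $\IND_{\{\tau_\dagger>t\}}$ in Proposition \ref{prop:repr}. Second, super-harmonicity must be read as $h\geq Ph$ on $V$, where $P$ is the sub-stochastic restriction.

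Integrability is immediate. $V$ is finite, so $h$ is bounded and $h(S_k)$ is bounded for every $k$. Adaptedness is also clear, since $h(S_k)$ is a function of $S_k$.

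For the main step we compute, by the Markov property of Lemma \ref{lem:law},
\[
\e[h(S_{k+1})\,|\,\cF_k] = \e[h(S_{k+1})\,|\,S_k].
\]
We condition on the value $y=S_k$ and split into three cases.
\begin{itemize}
\item If $S_k=\dagger$, the walk stays killed. Then $h(S_{k+1})=0=h(S_k)$.
\item If $y\in V$ and $y\neq x_{k+1}$, the kernel gives $S_{k+1}=y$ deterministically. Then $\e[h(S_{k+1})\,|\,S_k=y]=h(y)$.
\item If $y=x_{k+1}$, the walk jumps according to $\overline{P}(y,\cdot)$. It is killed with probability $1-\sum_{z\in V}P(y,z)$, and on that event $h=0$. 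This gives
\[
\e[h(S_{k+1})\,|\,S_k=y]=\sum_{z\in V}P(y,z)h(z)=Ph(y)=h(y)-\lapl h(y)\leq h(y),
\]
using $\lapl h\geq 0$.
\end{itemize}
In every case $\e[h(S_{k+1})\,|\,\cF_k]\leq h(S_k)$, which is the super-martingale property conditionally on $\cP=\cP_0$.

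The main obstacle is not the computation but the role of the boundary. If one instead used the harmonic extension with nonzero boundary values, the killing step would contribute an extra term. Our convention makes killing equivalent to jumping to $\partial V$, where $h=0$. This is consistent with how $\mu_t$ was defined through $P$ restricted to $V$, i.e. zero boundary data. No sign condition on $h$ is needed: the inequality comes purely from $Ph\leq h$ on $V$ together with $h(\dagger)=0$.
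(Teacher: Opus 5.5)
Your proof is correct and follows the same route as the paper: you compute $\e[h(S_{k+1})\,|\,\cF_k]$ from the kernel of Lemma \ref{lem:law}, getting $h(S_k)$ when $S_k\neq x_{k+1}$ and $Ph(S_k)\leq h(S_k)$ when $S_k=x_{k+1}$. Your explicit cemetery convention $h(\dagger)=0$ makes precise what the paper leaves implicit, since it sums only over $z\in V$ with the sub-stochastic $P$.
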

	\begin{proof}
		By Lemma \ref{lem:law} we write
		\begin{align*}
			\e[h(S_{k+1}) | \cF_k] &= \sum_{z\in V}h(z)(\IND_{z = S_k}\IND_{S_k\neq x_{k+1}}+P(S_k,z)\IND_{S_k = x_{k+1}})\\
			&=h(S_k)\IND_{S_k\neq x_{k+1}}+\IND_{S_k = x_{k+1}}Ph(x_{k+1}).
		\end{align*}
		Since $Ph(x_{k+1}) \leq h(x_{k+1})$ this concludes the proof.
	\end{proof}
	Accounting for Proposition \ref{prop:repr}, our goal is to understand the function $t \mapsto \e[h(X_t)|\cP]$, which translates into understanding the sequence $(\e[h(S_k)|\cP])_{k\geq 0}$. While the path $(S_k)_{k\geq 0}$ is most of the time stationary, the destination of its jumps is independent from the time at which they occur. It is thus natural to believe that by removing the downtime in $(S_k)_{k\geq0}$ we will recover the standard discrete random walk. We will show that it is indeed the case.\\
	\par{}Set 
	\[
	n_k = \sum_{i=1}^k\IND_{S_i\neq S_{i-1}},
	\]
	the number of jumps before time $k$, and 
	\[
	\tau_n = \inf\{k\geq 0 | n_k \geq n\},
	\]
	the time it takes for $(S_k)_{k\geq0}$ to make $n$ jumps. The \textit{effective path} $(\widetilde{S}_n)_{n\geq 0}$ is defined as $\widetilde{S}_n := S_{\tau_n}$ for all $n\in\N$. The effective path is the path trimmed from its stationary entries.
	\begin{pro}\label{prop:stopping}
		For any $n\geq 0$, $\tau_n$ is a $\cF$-stopping time. Furthermore, the effective path $(\widetilde{S}_n)_{n\geq 0}$ is the discrete killed random walk on $G$ and is independent of $\cP$.
	\end{pro}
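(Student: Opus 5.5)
The plan is to prove both claims directly from the construction of the killed random walk with clocks in Subsection \ref{subsec:RW}, rather than by computing transition probabilities of $(\widetilde{S}_n)_{n\geq 0}$ from scratch. To avoid a clash of notation, I write $(W_i)_{i\geq 0}$ for the discrete killed random walk used in that construction (called $S$ there), and keep $(S_k)_{k\geq 0}$ for the path $S_k = X_{t_k}$. I adjoin a cemetery state $\dagger$, so that a killed walk sits at $\dagger$ forever and a killing step counts as a change of position.

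\textbf{Stopping time.} The counter $n_k=\sum_{i\le k}\IND_{S_i\neq S_{i-1}}$ is a measurable function of $(S_0,\dots,S_k)$, so it is $\cF_k$-measurable. Since $n_k$ is nondecreasing in $k$, we have $\{\tau_n\le k\}=\{n_k\ge n\}\in\cF_k$, and $\tau_n$ is an $\cF$-stopping time. I also check that $\tau_n<\infty$ almost surely as long as the walk is alive. Conditionally on $\cP$, each site of the finite set $V$ has infinitely many clock rings almost surely. Hence, whenever the walk sits at a vertex of $V$, it will attempt a jump at some later ring.

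\textbf{Identification with the underlying walk.} By construction, $X$ is constant on $[T_i,T_{i+1}[$ and $X_{T_{i+1}}=W_{i+1}$. Moreover, each $T_{i}$ is one of the clock times $t_k$. So the path $(S_k)$ is exactly the sequence $W_0,W_1,\dots$ with repetitions inserted between successive jump times. The key point is that every attempted jump really changes the position. Since $\overline{P}$ has zero diagonal, $W_{i+1}\neq W_i$ whenever $W_i\in V$; a killing step also changes the position, since it moves the walk to $\dagger$. Therefore the indices $k$ with $S_k\neq S_{k-1}$ are exactly the indices with $t_k=T_i$ for some $i\ge1$. The $n$-th change of $S$ happens at time $T_n$, which gives $\widetilde{S}_n=S_{\tau_n}=X_{T_n}=W_n$ for all $n$ (with $\widetilde S_n=\dagger$ after killing). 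Since $(W_i)$ is the discrete killed random walk and was taken independent of $\cP$ in the construction, the effective path is the discrete killed random walk and is independent of $\cP$.

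The only delicate step is this identification: one must rule out a ``jump'' that leaves the position unchanged, which would make the effective path skip a step of $W$. The zero-diagonal assumption is what rules this out, and I expect that to be the place where care is needed. As a sanity check, I would also give an alternative argument that conditions on $\cP=\cP_0$ and uses only Lemma \ref{lem:law} together with the strong Markov property at $\tau_n$. At the first index $k>\tau_n$ with $x_{k}=\widetilde S_n$, the walk moves according to $P(\widetilde S_n,\cdot)$, whatever $\cF_{\tau_n}$ and $\cP_0$ are. This shows that $\p(\widetilde S_{n+1}=z\mid \widetilde S_0,\dots,\widetilde S_n,\cP=\cP_0)=P(\widetilde S_n,z)$. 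This conditional law does not depend on $\cP_0$, which gives both the law of the effective path and its independence from $\cP$.
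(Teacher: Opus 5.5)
Your proof is correct, but your main argument takes a different route from the paper's. The paper never uses the explicit coupling between $X$ and the driving discrete walk. It fixes $\cP=\cP_0$ and notes that $S_{\tau_{n+1}-1}=S_{\tau_n}$ and that a ring must occur at that site at index $\tau_{n+1}$. It then reads off from Lemma \ref{lem:law} that $\p(S_{\tau_{n+1}}=z\mid S_{\tau_n}=y,\cP=\cP_0)=P(y,z)$. Since this does not depend on $\cP_0$ and $S_0$ is independent of $\cP$, it concludes; this is essentially the ``sanity check'' you sketch at the end.

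Your pathwise identification $\widetilde S_n=W_n$ gives a stronger, almost-sure statement and needs no Markov-property bookkeeping. Independence from $\cP$ is then immediate, because $W$ was chosen independent of $\cP$ in the construction. It also shows exactly where the zero-diagonal assumption enters, which the paper uses only silently: with self-loops, conditioning on a change of position at $\tau_{n+1}$ would give the law $P(y,z)/(1-P(y,y))$ for $z\neq y$, i.e.\ the walk with loops erased. You also treat the killing (cemetery state, $\tau_n=\infty$ afterwards), which the paper ignores.

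The paper's route, in turn, relies only on the conditional law of the path from Lemma \ref{lem:law}, not on the particular way $X$ is built from $W$. So it applies to any construction of the path with that law. As written, however, it conditions only on $S_{\tau_n}$ rather than on the whole past $\widetilde S_0,\dots,\widetilde S_n$. The strong Markov step at $\tau_n$, which you make explicit in your alternative argument, is needed to conclude that $\widetilde S$ is a Markov chain with kernel $P$.
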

	\begin{proof}
		The fact that $\tau_n$ is a $\cF$-stopping time follows from the fact that $n_k$ is $\cF_k$ measurable for any $k\geq 0$.\\\par{}
		Let $\cP_0 := \{(x_1,t_1), \dots, (x_i,t_i),\dots\}$ be a realization of clocks. Let us prove that
		\[
		\forall n\geq 0,~ \forall y,z\in V,~~~\p(S_{\tau_{n+1}} = z| S_{\tau_{n}} = y, \cP=\cP_0) = P(x,y).
		\]
		First notice that for all indices $i\in [\tau_n, \tau_{n+1})$, we have $S_i = S_{\tau_n}$, so in general
		\[
		S_{\tau_{n+1}-1} = S_{\tau_n}.
		\]
		Since there must be a jump at time $\tau_{n+1}$, we know that $(S_{\tau_{n+1}-1}, t_{\tau_{n+1}}) \in \cP_0$. By the Lemma \ref{lem:law} we have
		\[
		\p(S_{\tau_{n+1}} = z| S_{\tau_{n}} = y, \cP=\cP_0) = \p(S_{\tau_{n+1}}= z| S_{\tau_{n+1}-1} = y, \cP=\cP_0)= P(y,z).
		\]
		Notice that the result is independent of $\cP_0$, and since $S_{\tau_0} = X_0$ is independent of $\cP_0$, this concludes the proof.
	\end{proof}
	A direct corollary is that
	\[
	\forall n\geq0,~\forall x\in V,~~~\e[h(\widetilde{S}_n^x)|\cP] = P^nh(x).
	\]
	Importantly we have shown that if $h$ is super-harmonic, the path is a super-martingale. This means that on the event that the walk has jumped at least $n$ times before step $k$, we can bound the conditional expectation $\e[h(S_k^x)|\cP]$ by the deterministic quantity $P^nh(x)$.\\
	\par{}The number of jumps that the conditional walk makes is difficult to analyze in general. However the law of $\tau_n$ when averaged over $\cP$ is simply the time a continuous-time killed random walk takes to make $n$ jumps. The number of jumps a continuous-time walk does before time $t$ is a Poisson variable of parameter Poiss$(t)$. This implies, in particular, that
	\[
	\e[{\rm P}(\tau_n > \iota_t)] = \p(\text{Poiss}(t) \leq n).
	\]
	This gives us some control over the deviations of $\tau_n$. 
	\begin{lem}[Simplified Chernoff Bounds for Poisson variables]
		Let $t > 0$ and $X\sim\text{Poiss}(t)$ be a Poisson variable. Then for all $s\in (0,t)$:
		\[
		\ACO{\p(X \leq t-s) &\leq \exp(-s^2/2t),\\
			\p(X \geq t+s) &\leq \exp(-s^2/4t).}
		\]
	\end{lem}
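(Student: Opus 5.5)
The plan is the standard Chernoff (exponential Markov) method, using the explicit Poisson moment generating function $\e[e^{\theta X}] = \exp(t(e^\theta-1))$ for all $\theta\in\R$, followed by elementary inequalities on the resulting rate function. Both tails reduce to bounding $\exp(-t\,\phi(u))$, where $u = s/t\in(0,1)$ and $\phi(u) = (1+u)\log(1+u) - u$ (upper tail) or $\phi(u) = (1-u)\log(1-u)+u$ (lower tail).

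For the lower tail, fix $\theta>0$. Markov's inequality applied to $e^{-\theta X}$ gives
\[
\p(X\leq t-s) \leq e^{\theta(t-s)}\e[e^{-\theta X}] = \exp\PAR{t(e^{-\theta}-1)+\theta(t-s)}.
\]
Take the optimal choice $\theta = -\log(1-u)$, which is admissible since $u<1$. The exponent becomes $-t\,\psi(u)$ with $\psi(u) = (1-u)\log(1-u)+u$. It then suffices to show $\psi(u)\geq u^2/2$ on $[0,1)$. Both sides vanish at $0$ together with their first derivatives, and $\psi''(u) = 1/(1-u)\geq 1$. Integrating twice gives $\psi(u)\geq u^2/2$, hence $\p(X\leq t-s)\leq \exp(-s^2/2t)$.

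For the upper tail, use $e^{\theta X}$ with $\theta = \log(1+u)>0$. This gives
\[
\p(X\geq t+s)\leq \exp\PAR{t(e^\theta-1)-\theta(t+s)} = \exp(-t\,\phi(u)), \qquad \phi(u) = (1+u)\log(1+u)-u.
\]
We need $\phi(u)\geq u^2/4$ for $u\in(0,1)$. Again $\phi(0)=\phi'(0)=0$, and now $\phi''(u) = 1/(1+u)\geq 1/2$ on $[0,1]$. Integrating twice yields $\phi(u)\geq u^2/4$, so $\p(X\geq t+s)\leq \exp(-s^2/4t)$.

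There is no real obstacle here. The only point requiring care is the restriction $s<t$ (i.e. $u<1$), which is exactly what makes the crude curvature bound $\phi''\geq 1/2$ valid. This is why the upper-tail constant is $4$ rather than the sharper Bernstein-type denominator $2(t+s/3)$. If one wanted the statement for all $s>0$ in the upper tail, one would instead use $\phi(u)\geq u^2/(2(1+u/3))$. That is not needed for the statement as given.
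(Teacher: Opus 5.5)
Your proposal is correct and follows essentially the same route as the paper. Both use the optimized Chernoff bound for the Poisson law, followed by the elementary inequalities $(1-u)\log(1-u)+u\geq u^2/2$ and $(1+u)\log(1+u)-u\geq u^2/4$ on $(0,1)$. You derive the Chernoff bound from the moment generating function and justify both inequalities via the second-derivative bounds, whereas the paper simply asserts them.
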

	\begin{proof}
		Using the standard Chernoff bound for Poisson variables yields
		\begin{align*}
			\p(X \leq t-s) &\leq (1-s/t)^{-(t-s)}e^{-s}\\
			&= \exp(-t(1-s/t)\ln(1-s/t) - s).
		\end{align*}
		We use the fact that for any $x\in (0,1)$ we have $-(1-x)\ln(1-x)\leq x-x^2/2$ to get $\p(X\leq t-s) \leq e^{-s^2/2t}$. We get the other bound similarly, using $(1+x)\ln(1+x) \geq x+x^2/4$ for $x\in (0,1)$.
	\end{proof}
	\par{} 
	
	Using Poisson concentration, we know that at time $t$, with high probability, the walk will have jumped at least $t$ times (up to a factor of $\sqrt{t}$), so we can bound $\e[h_0(X_t^x)|\cP=\cP_0]$ by $P^{t}h_0(x)$. More precisely, we have the following result.
	\begin{pro}\label{prop:bound}
		Let $(\mu_t)_{t\geq0}$ be the asynchronous DeGroot process defined above in Subsection \ref{subsec:decomp}, with initial condition $\mu_0 : V\to\R$ super-harmonic. Set $\overline\mu_t = P^{\FLOOR{t}}\mu_0$. \\
		For any $t > s > 0$ and $\varepsilon>0$, we have:
		\[
		\p\PAR{\overline\mu_{t+s+1}-\varepsilon \leq \mu_t \leq \overline\mu_{t-s}+\varepsilon }\geq 1-\frac{2|V|\|\mu_0\|_\infty}{\varepsilon}e^{-s^2/4t},
		\] 
		where $\|\mu_0\|_\infty := \max\limits_{x\in V}|\mu_0(x)|.$
	\end{pro}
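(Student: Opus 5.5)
The plan is to work conditionally on the clocks and combine the representation of Proposition \ref{prop:repr} with the super-martingale property of Lemma \ref{lem:mart}. The law of $\theta_t\cP$ coincides with that of $\cP$ on $[0,t]$, and the target event is defined coordinatewise. So it suffices to fix $x\in V$ and bound the probability that $\mu_t(x)$ falls outside the window; a union bound over the $|V|$ sites then accounts for the factor $|V|$.

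\textbf{Setup.} Write $h=\mu_0$ and extend it by $0$ on the boundary (the cemetery after killing). By the minimum principle, a super-harmonic function with zero boundary values is nonnegative, so $0\le h\le \|h\|_\infty$. Fix a realization $\cP_0$ and set $\iota=\iota_t(\cP_0)$, which is deterministic given the clocks. Then
\[
\mu_t(x)=\e[h(S^x_{\iota})\mid \cP=\cP_0].
\]
By Lemma \ref{lem:mart}, $(h(S_k))_k$ is a bounded super-martingale. By Proposition \ref{prop:stopping}, each $\tau_n$ is a stopping time and $\e[h(S_{\tau_n})\mid\cP]=P^nh(x)$. Since $\tau_n$ may be infinite when the walk is killed, I will interpret $S_{\tau_n}$ as the cemetery in that case, or work with $\tau_n\wedge K$ and let $K\to\infty$ using boundedness.

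\textbf{Upper bound.} Take $n=\FLOOR{t-s}$, so that $P^n h=\overline\mu_{t-s}$. Optional stopping between the bounded stopping times $\tau_n\wedge\iota\le\iota$ gives
\[
\e[h(S_\iota)]\le\e[h(S_{\tau_n\wedge\iota})]\le \e[h(S_{\tau_n})\IND_{\tau_n\le\iota}]+\|h\|_\infty\,\p(\tau_n>\iota\mid\cP).
\]
Nonnegativity of $h$ lets me drop the indicator in the first term, which yields $\mu_t(x)\le \overline\mu_{t-s}(x)+\|h\|_\infty q_x$, where $q_x=\p(\tau_n>\iota\mid\cP)$. Averaging over the clocks, the number of jumps of the continuous walk by time $t$ is $\text{Poiss}(t)$, so $\e[q_x]=\p(\text{Poiss}(t)<n+1)\le \p(\text{Poiss}(t)\le t-s)\le e^{-s^2/2t}$. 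Markov's inequality then gives $\p(\|h\|_\infty q_x>\varepsilon)\le \|h\|_\infty e^{-s^2/4t}/\varepsilon$.

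\textbf{Lower bound.} Take $n=\FLOOR{t+s+1}\ge t+s$, so that $P^nh=\overline\mu_{t+s+1}$. Now I use optional stopping in the other direction, between $\iota\le \tau_n\vee\iota$:
\[
P^nh(x)=\e[h(S_{\tau_n})]\le \e[h(S_{\tau_n\vee\iota})]+\|h\|_\infty\p(\tau_n<\iota\mid\cP)\le \mu_t(x)+\|h\|_\infty\p(\tau_n\le\iota\mid\cP).
\]
The first inequality holds because $S_{\tau_n}$ and $S_{\tau_n\vee\iota}$ agree except on $\{\tau_n<\iota\}$. The averaged probability is $\p(\text{Poiss}(t)\ge n)\le\p(\text{Poiss}(t)\ge t+s)\le e^{-s^2/4t}$ by the Chernoff lemma. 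Markov's inequality again gives failure probability at most $\|h\|_\infty e^{-s^2/4t}/\varepsilon$.

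\textbf{Conclusion and main obstacle.} A union bound over the two sides and over $x\in V$ gives the stated bound $2|V|\|\mu_0\|_\infty e^{-s^2/4t}/\varepsilon$. I expect the main obstacle to be justifying optional stopping when $\tau_n\vee\iota$ is unbounded or infinite after killing. I would handle this by truncating at $\iota+K$, with the cemetery value $0$ consistent with the super-martingale property, and passing to the limit by dominated convergence. The remaining point to check is that the identity $\e[h(S_{\tau_n})\mid\cP]=P^nh$ survives this convention, which follows from Proposition \ref{prop:stopping}.
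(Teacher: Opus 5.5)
Your proposal is correct and is essentially the paper's proof: the representation $\mu_t(x)=\e[h(S^x_{\iota_t})\mid\cP]$, optional stopping for the super-martingale at $\tau_n\wedge\iota_t$ and $\tau_{n'}\vee\iota_t$ with $n=\FLOOR{t-s}$ and $n'=\FLOOR{t+s+1}$, the Poisson Chernoff tails, Markov's inequality and a union bound. Your use of $h\ge 0$ (minimum principle) to bound the error terms, and your truncation argument for the unbounded time $\tau_{n'}\vee\iota_t$, make explicit two points the paper leaves implicit.

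One step needs tightening, and the paper glosses over it too. Because you set $\tau_n=\infty$ when the walk is killed before its $n$-th jump, $q_x=\p(\tau_n>\iota_t\mid\cP)$ counts killed paths, so its average over the clocks is not a Poisson tail. Instead, bound the upper-side error by $\|h\|_\infty\,\p(\tau_n>\iota_t,\ S_{\iota_t}\neq\dagger\mid\cP)$, which is valid because $h(\dagger)=0$. Its average is at most $\p(\mathrm{Poiss}(t)<n)$, since the walk's jump times form a rate-one Poisson process as long as it is alive.
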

	\begin{proof}
		Using the backward random walk representation detailed in Subsection \ref{subsec:RW}, we know that $\mu_t = \e[h_0(X_t^x)|\cP].$\\
		Let $k = \iota_t$ be the number of clocks that rang before time $t$, such that
		\[
		\mu_t(x) = \e[h_0(S_k^x)| \cP].
		\]
		By Lemma \ref{lem:mart}, $(h_0(S_i^x))_{i\geq 0}$ is a super-martingale. Doob’s optional sampling theorem applies, meaning that for any $n,n'\geq 0$, we have 
		\[
		\e[h_0(S_{k \vee \tau_{n'}}^x)|\cP] \leq \mu_t(x)  \leq \e[h_0(S_{k \wedge \tau_n}^x)|\cP].
		\]
		We decompose the right term according to the number of jumps before time $k$
		\[
		\e_x[h_0(S_{k \wedge \tau_n})|\cP] \leq \underbrace{\e[h_0(\widetilde{S}_n^x)]}_{=P^nh_0(x)}+\|h_0\|_\infty{\rm P}_x(\tau_n > k).
		\]
		We use the same type of bound for the left term.\\
		\par{} Set $n =\FLOOR{t-s}$ and $n' = \FLOOR{t+s+1}$. We have proven the following point-wise inequality
		\[
		\forall t\geq 0,~\forall x\in V,~~~\overline\mu_{t+s+1}(x)-\|h_0\|_\infty {\rm P}_x(\tau_{\FLOOR{t+s+1}} < \iota_t) \leq \mu_t(x) \leq \overline\mu_{t-s}(x)+\|h_0\|_\infty {\rm P}_x(\tau_{\FLOOR{t-s}} > \iota_t).
		\]
		Recall that when averaging over $\cP$, the number of jumps the walk does before time $t$ is a Poisson variable, such that
		\[
		\ACO{\e[{\rm P}_x(\tau_{\FLOOR{t-s}} > \iota_t)] &= \p(\text{Poiss}(t) \leq \FLOOR{t-s}),\\
			\e[{\rm P}_x(\tau_{\FLOOR{t+s+1}} < \iota_t)] &= \p(\text{Poiss}(t) \geq \FLOOR{t+s+1}).}
		\]
		Using our simplified Chernoff bound and Markov's inequality we have
		\[
		\p\SBRA{{\rm P}_x(\tau_{\FLOOR{t-s}} > \iota_t) \geq \varepsilon},~\p[{\rm P}_x(\tau_{\FLOOR{t+s+1}} < \iota_t)\geq\varepsilon] \leq \frac{e^{-s^2/4t}}{\varepsilon}.
		\]
		We conclude by a union bound on $x\in V$.
	\end{proof}
	\subsection{Approximation of the evolution}
	This concentration-type result provides precise control on the ``mean process" which in turn governs the mixing of the equilibrium-shifted initialized chain. 
	Indeed, from our decomposition, we know that if our initial condition is given by a shift of the equilibrium measure then the mixing is totally governed by the ``mean process" $(\mu_t)_{t\geq 0}$. \\
	We have previously shown in Subsection \ref{subsec:comb}, that if $(\overline{h}_t)_{t\geq 0}$ is the evolution started from a noisy initial condition $\cN(h_0,g)$ then 
	\[
	\|\cN(h_0,g)\scrP_t-\Gamma\|_{TV} \leq \e[\erf(\sqrt{\cE(\mu_t)}/2\sqrt{2})],
	\]
	where $(\mu_t)_{t\geq 0}$ is the mean process started from $h_0$. Since we have a precise point-wise estimate of $\mu_t$ in the case of super-harmonic initial conditions, we can show a sharp profile for the mixing of this process $(\overline{h}_t)_{t\geq 0}$. \\\par{}
	Note that to have concentration of the Poisson clocks, we need the technical assumption that $\sqrt{\lambda}\log|V| \to 0$. This occurs in many examples such as grids in any dimension. Notable exceptions are expander graphs, whose spectral gap is bounded from below.
	\begin{pro}\label{prop:conc}
		Let $\Gamma$ be an arbitrary DGFF as in Subsection \ref{subsec:dgff}. Let $h_0$ be a super-harmonic function. Let $(\overline{h_t})_{t\geq0}$ evolve following the Glauber dynamic for $\Gamma$ starting from $\cN(h_0, g)$. Let $\overline{\mu}_t = P^{\FLOOR{t}}h_0$.\\ 
		Fix any $x\in\R$, arbitrary exponent $\alpha>0$. Let $t\in\R_+$ be any time  such that 
		\[
		t\geq \frac{1}{2\lambda}\log \cE(h_0) + \frac{x}{\lambda},
		\]
		where $\lambda$ is the spectral gap. We have
		\[
		\|\cN(h_0,g)\scrP_t-\cN(\overline\mu_t,g)\|_{TV}^2 \leq C_1e^{-x}\PAR{ \frac{\max(|V|,\pi(V))^{8}}{\cE(h_0)^\alpha} +  e^{C_2\sqrt{\lambda}\log\cE(h_0)}}+\frac{1}{\max(|V|,\pi(V))},
		\]
		where $C_1,C_2$ are constants depending only on $\alpha$.
	\end{pro}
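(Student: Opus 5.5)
Conditionally on the clocks $\cP$, $\overline{h}_t = \overline{\sigma}_t + \mu_t$ is distributed as $\cN(\mu_t, g)$, since $\overline{\sigma}_t\sim\Gamma$ is independent of $\cP$. Because total variation is convex under mixtures, we get
\[
\|\cN(h_0,g)\scrP_t-\cN(\overline\mu_t,g)\|_{TV} \leq \e\SBRA{\erf\PAR{\tfrac{\sqrt{\cE(\mu_t-\overline\mu_t)}}{2\sqrt 2}}}\leq \e\SBRA{\min\PAR{1,c\sqrt{\cE(\mu_t-\overline\mu_t)}}}.
\]
This uses Proposition \ref{prop:1} with covariance $g=\lapl^{-1}$, so that $\|g^{-1/2}(\mu_t-\overline\mu_t)\|^2$ is the energy in $\ell^2(\pi)$. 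The task is therefore to show that the random function $\mu_t$ is close in energy to the deterministic $\overline\mu_t=P^{\FLOOR{t}}h_0$. By Jensen, it suffices to bound $\e[\min(1,\cE(\mu_t-\overline\mu_t))]$, which matches the squared TV on the left-hand side.

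The main input is Proposition \ref{prop:bound}. Choose $s=\sqrt{4t\,\beta\log M}$ with $M=\max(|V|,\pi(V))\vee\cE(h_0)$ and a suitable $\beta$. On the good event we have $\overline\mu_{t+s+1}-\varepsilon\le\mu_t\le\overline\mu_{t-s}+\varepsilon$. Super-harmonicity gives $P\overline\mu\le\overline\mu$, so $\overline\mu_{t+s+1}\le\overline\mu_t\le\overline\mu_{t-s}$, and both $\mu_t$ and $\overline\mu_t$ lie in the same band. Hence
\[
|\mu_t-\overline\mu_t|\le (\overline\mu_{t-s}-\overline\mu_{t+s+1})+2\varepsilon = (I-P^{\lfloor t+s+1\rfloor-\lfloor t-s\rfloor})\overline\mu_{t-s}+2\varepsilon .
\]
The energy is bounded by the $\ell^2(\pi)$ norm, since $\cE(f)\le 2\|f\|_\pi^2$ because the spectrum of $\lapl$ lies in $[0,2]$. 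It therefore suffices to bound $\|(I-P^{m})P^{k}h_0\|_\pi^2$ with $k=\FLOOR{t-s}$ and $m\approx 2s$. Since $h_0$ is super-harmonic, $h_0\ge 0$ by the minimum principle with zero boundary, and every spectral component lives on eigenvalues of $P$ at most $1-\lambda$ in modulus. The eigenvalues near $-1$ need care; I would handle them by replacing $P$ with the lazy version or by pairing steps, and I expect this to be manageable. A spectral bound then gives roughly $\|P^kh_0\|_\pi^2\le e^{-2\lambda k}\|h_0\|_\pi^2\le e^{-2\lambda k}\cE(h_0)/\lambda$. With $t\ge\frac1{2\lambda}\log\cE(h_0)+x/\lambda$ and $s\sim\sqrt{t\log M}$, this is at most $e^{-2x}\lambda^{-1}e^{2\lambda s}$. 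Since $\lambda s\lesssim\sqrt{\lambda\log\cE(h_0)}\cdot\sqrt{\log M}$, this yields a factor of the type $e^{C_2\sqrt\lambda\log\cE(h_0)}$ once $\log M\lesssim\log \cE(h_0)$. I would sharpen this using $\|(I-P^m)f\|$ instead of the crude $\|f\|$ only if needed, and leftover polynomial factors get absorbed into the stated form.

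For the bad event and the $\varepsilon$ term, take $\varepsilon=M^{-4}$. The $\varepsilon$ contribution to the energy is $O(\varepsilon^2\pi(V))$, which is negligible. Proposition \ref{prop:bound} bounds the bad-event probability by $2|V|\|h_0\|_\infty\varepsilon^{-1}e^{-s^2/4t}$. We have $\|h_0\|_\infty\le\sqrt{\cE(h_0)/(\lambda\min\pi)}$, which is polynomial in $M$, and the $\lambda^{-1}$ is also controlled by a power of $M$ since $\lambda\gtrsim 1/\mathrm{poly}$. Choosing $\beta$ large in terms of $\alpha$ makes this at most $M^{8}\cE(h_0)^{-\alpha}$ up to constants, or $1/M$. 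The factor $e^{-x}$ can be inserted harmlessly when $x\le 0$ because the left side is at most $1$; for $x>0$ it comes from the spectral decay. Summing the good-event energy bound and the bad-event probability (where the $\min(1,\cdot)$ contributes at most $1$) gives the claim.

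\textbf{Main obstacle.} I expect the hardest step to be turning the pointwise sandwich into an energy bound with the right exponent: the spectral decay has to produce $e^{-x}$ while the window $s\sim\sqrt{t\log M}$ only costs $e^{C\sqrt\lambda\log\cE}$. I would first try the crude $\ell^2$ bound with laziness handling negative eigenvalues. If constants misbehave, I would instead use monotonicity of $\overline\mu$ and positivity to estimate $\langle f,\lapl f\rangle_\pi$ for the nonnegative difference $f=\overline\mu_{t-s}-\overline\mu_{t+s+1}$ directly.
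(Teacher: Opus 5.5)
\textbf{There is a genuine gap in the energy step.} Your overall architecture matches the paper's: condition on $\cP$, apply Proposition \ref{prop:1} and Jensen, use the sandwich of Proposition \ref{prop:bound} on a good event, and Markov's inequality on the bad event. The step you flag as the main obstacle, however, fails as proposed.

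You convert $|\mu_t-\overline\mu_t|\le \overline\mu_{t-s}-\overline\mu_{t+s+1}+2\varepsilon$ into an energy bound through $\cE(f)\le 2\|f\|_\pi^2$ and $\|h_0\|_\pi^2\le \cE(h_0)/\lambda$. On the good event this gives $\cE(\mu_t-\overline\mu_t)\lesssim e^{-2x}e^{2\lambda s}/\lambda$. The factor $1/\lambda$ cannot be ``absorbed into the stated form.'' The only polynomial prefactor on the right-hand side multiplies $\cE(h_0)^{-\alpha}$. The other term is $C_1e^{-x}e^{C_2\sqrt\lambda\log\cE(h_0)}$, which is $O(e^{-x})$ in the relevant regime $\sqrt\lambda\log\cE(h_0)\to 0$.

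The loss is real for this method. For $h_0$ the Perron eigenvector, $\|h_0\|_\pi^2=\cE(h_0)/\lambda$ exactly. On the grid at fixed $x$ your bound is then of order $n^2e^{-2x}$, so $\min(1,\cdot)=1$ and nothing is proved.

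Keeping the factor $(I-P^m)$ does not rescue this. Even on the top eigencomponent it only improves the bound to order $\lambda s^2e^{-2x}e^{2\lambda s}$. The union bound over $V$ forces $s^2\gtrsim t\log|V|$, so one still has $\lambda s^2\gtrsim \log\cE(h_0)\log|V|\to\infty$.

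Your fallback, estimating $\cE(\overline\mu_{t-s}-\overline\mu_{t+s+1})$, does not close the gap either. A pointwise bound $|u|\le f$ does not give $\cE(u)\lesssim \cE(f)$. A function confined to a thin band may still oscillate, and its energy is then comparable to $\|u\|_\pi^2$ rather than to $\lambda\|u\|_\pi^2$.

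\textbf{The missing idea: the random mean process itself stays super-harmonic.} An update at $x$ sets $\mu(x)\leftarrow P\mu(x)\le \mu(x)$. This makes $\lapl\mu(x)=0$ (since $P(x,x)=0$) and can only increase $\lapl\mu(y)$ for $y\neq x$. Hence $\lapl\mu_t\ge 0$ and $\lapl\overline\mu_t\ge0$ deterministically. Now expand
\[
\cE(\mu_t-\overline\mu_t)=\langle\mu_t,\lapl\mu_t\rangle_\pi+\langle\overline\mu_t,\lapl\overline\mu_t\rangle_\pi-2\langle\mu_t,\lapl\overline\mu_t\rangle_\pi .
\]
Every term pairs $\mu_t$ against a nonnegative vector, so the sandwich can be inserted monotonically. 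For the first term you use the symmetry of $\lapl$ in $\ell^2(\pi)$ twice. Up to $O(\varepsilon)$ terms, the energy is then bounded by the deterministic quantity
\[
\cE(\overline\mu_{t-s})+\cE(\overline\mu_t)-2\langle\overline\mu_{t+s+1},\lapl\overline\mu_t\rangle_\pi .
\]
This quantity is first order in the window and stays weighted by the eigenvalues. The paper then bounds it spectrally, splitting the eigenvalues at $(1+\alpha)\lambda$, by $O\bigl((\cE(h_0)^{-\alpha}+\sqrt\lambda\log\cE(h_0))\,e^{C\sqrt\lambda\log\cE(h_0)}\,e^{-2x}\bigr)$, with no $1/\lambda$ loss. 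This is what keeps the profile inside an $O(1/\lambda)$ window.
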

	
	\begin{proof}
		We write $\overline{h_t} = \mu_t + Z$ where $Z\sim\Gamma$. Set $s := \log \cE(h_0)\sqrt{\alpha/2\lambda}$. In the following, for a function $f : V\to \R$, we denote 
		\[
			\|f\|_\pi = \sqrt{\sum_{x\in V}\pi(x)f(x)^2}= \sqrt{\ANG{f,f}_\pi},~~~\|f\|_\infty =\max_{x\in V}|f(x)|.
		\]
		\par{}Let $A$ be the event $\{\overline{\mu}_{t+s+1}-\varepsilon\leq \mu_t \leq \overline\mu_{t-s}+\varepsilon\}$, for some $\varepsilon$ to be chosen later.  By Proposition \ref{prop:bound} 
		\[
			1-\p(A) \leq \frac{2|V|\|h_0\|_\infty}{\varepsilon}e^{-s^2/4t} \leq \frac{2|V|\|h_0\|_\infty}{\cE(h_0)^{\alpha/2}\varepsilon}.
		\]
		By conditioning over $\cP$ and using Proposition \ref{prop:1}, we have
		\begin{align*}
			\|\cN(h_0,g)\scrP_t-\cN(\overline\mu_t,\lapl^{-1})\|_{TV}^2 &\leq \e[\erf(\sqrt{\cE(\mu_t-\overline\mu_t)}/2\sqrt{2})]^2\\
			&\leq C\frac{|V|^2\|h_0\|_\infty^2}{\cE(h_0)^{\alpha}\varepsilon^2}+C\e[\IND_A\ANG{\mu_t-\overline\mu_t,\lapl(\mu_t-\overline\mu_t)}_\pi],
		\end{align*}
		for some universal constant $C>0$ (because the error function is linear at 0). \\
		\par{}For the right term we note that 
		\[
		\|h_0\|_\infty \leq \frac{1}{\sqrt{\pi(V)}}\|h_0\|_\pi \leq \frac{1}{\lambda\sqrt{\pi(V)}}\cE(h_0).
		\]
		Using Cheeger's inequality and a very crude lower bound for Cheeger's constant, we can obtain
		\[
		\lambda \geq \frac{1}{2|V|^3}.
		\]
		\par{}Next, using the classical monotonicity of the model, we have deterministically, for all time $t\geq0$, $\lapl\mu_t \geq 0,\lapl\overline{\mu}_t \geq 0$. This implies that the functions $x \mapsto\ANG{x,\lapl\overline\mu_t}_\pi$  and $x\mapsto\ANG{x,\lapl\mu_t}_\pi$ are non-decreasing. We can now expand the left term to obtain
		\begin{align*}
			\IND_A\ANG{\mu_t-\overline\mu_t,\lapl(\mu_t-\overline\mu_t)}_\pi &\leq \IND_A(\ANG{\mu_t,\lapl\mu_t}_\pi+\ANG{\overline\mu_t,\lapl\overline\mu_t}_\pi -2\ANG{\mu_t,\lapl\overline\mu_t}_\pi),\\
			&\leq \ANG{\overline\mu_{t-s} + \varepsilon\textbf{1},\lapl\mu_t}_\pi+\ANG{\overline\mu_t,\lapl\overline\mu_t}_\pi -2\ANG{\overline\mu_{t+s+1} -\varepsilon\mathbf{1},\lapl\overline\mu_t}_\pi,\\
			&\leq 5\varepsilon\ANG{\textbf{1},\lapl\overline\mu_t}_\pi +\varepsilon^2\ANG{\textbf{1},\lapl\textbf{1}}_\pi+\left(\ANG{\overline\mu_{t-s},\lapl\overline\mu_{t-s}}_\pi+\ANG{\overline\mu_t,\lapl\overline\mu_t}_\pi -2\ANG{\overline\mu_{t+s+1},\lapl\overline\mu_t}_\pi\right),\\
			&\leq 5\varepsilon\ANG{\textbf{1},\lapl\overline\mu_t}_\pi + \varepsilon^2 \pi(V) + 2\ANG{\overline\mu_{t-s}-\overline\mu_{t+s+1},\lapl\overline\mu_t}_\pi,
		\end{align*}
		where we used the monotonicity of $t\mapsto \overline\mu_t$. \\\\
		We can deal with the first term by the Cauchy-Schwarz inequality
		\[ \ANG{\textbf{1},\lapl\overline\mu_t}_\pi \leq \sqrt{\pi(V)}\|\lapl\overline\mu_t\|_\pi \leq e^{-x/2}\sqrt{\frac{\pi(V)}{\cE(h_0)}} .\]
		Fixing now $\varepsilon=1/\max(|V|,\pi(V))$ makes both $5\varepsilon\|\lapl\overline\mu_t\|_1$ and $\varepsilon^2 |V|$ be bounded by a constant multiple of $1/\max(|V|,\pi(V))$.\\
		\par{}For the last term, take $(\phi_x,\lambda_x)_{x\in V}$, an orthonormal diagonalization of the Laplacian $\lapl$ in the space $\ell^2(\pi)$. Using this eigenvector decomposition we have
		\begin{align*}
			\ANG{\overline\mu_{t-s}-\overline\mu_{t+s},\lapl\overline\mu_t}_\pi &= \sum_{x\in V}\lambda_x\ANG{h_0,\phi_x}_\pi^2(1-\lambda_x)^{2t-s}\SBRA{1-(1-\lambda_x)^{2s}}\\
			&\leq \sum_{x\in V}\lambda_x\ANG{h_0,\phi_x}_\pi^2e^{-\lambda_x(2t-s)}\SBRA{1-(1-\lambda_x)^{2s}}.
		\end{align*}
		By separating the terms in the sum above, according to whether $\lambda_x \geq (1+\alpha)\lambda$ or not, we have
		\begin{align*}
			\ANG{\overline\mu_{t-s}-\overline\mu_{t+s},\lapl\overline\mu_t}_\pi &\leq \max\BRA{e^{-\lambda \alpha(2t-s)}, 1-(1-(1+\alpha)\lambda)^{2s}}e^{-\lambda(2t-s)}\sum_{x\in V}\lambda_x\ANG{h_0,\phi_x}_\pi^2\\
			&=2\max\BRA{e^{-\lambda \alpha(2t-s)}, 1-(1-(1+\alpha)\lambda)^{2s}}e^{-\lambda(2t-s)}\cE(h_0).
		\end{align*}
		By Bernouilli's inequality we have 
		\[
		\ANG{\overline\mu_{t-s}-\overline\mu_{t+s},\lapl\overline\mu_t}\leq 2( \cE(h_0)^{-\alpha}+ 2(1+\alpha)\sqrt{\alpha\lambda}\log \cE(h_0))e^{\sqrt{\alpha\lambda}\log \cE(h_0)}e^{-x},
		\]
		for some universal constant $C'$ depending only on $\alpha$. This concludes the proof.
	\end{proof}
	
	\section{Estimates on the covariance matrix}\label{sec:var}
	We conclude the proof of Theorem \ref{thm:2} by dealing with the covariance $(\Sigma_t)_{t\geq 0}$ defined in Subsection \ref{subsec:decomp}. We show that initial noise does not affect mixing if the underlying graph has enough connectivity
	\begin{lem}
		Let $\Gamma$ be an arbitrary DGFF as in Subsection \ref{subsec:dgff}. Let $\lambda$ be its spectral gap
		\[
			\lambda = \min_{\substack{f : V\to\R\\f\neq 0}}\frac{\cE(f,f)}{\ANG{f,f}}.
		\]
		Let $x\in\R$. Let $t\in\R_+$ be any time  such that 
		\[
		t\geq \frac{1}{2\lambda}\log(\max(V,\pi(V)))  + \frac{x}{2\lambda}.
		\]
		There exists a universal constant $C$ such that
		\[
		 	\sup_{h_0 : V \to \R}\|\delta_{h_0}\scrP_t-\cN(h_0,g)\scrP_t\|_{TV} \leq Ce^{-x}\frac{\log(1/\lambda)+1}{\sqrt{\lambda \max(|V|,\pi(V))}}.
		\]
	\end{lem}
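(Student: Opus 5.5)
Conditionally on the clocks $\cP$, the law of $h_t$ started from $\delta_{h_0}$ is $\cN(\mu_t,\Sigma_t)$, while the law of $\overline h_t$ started from $\cN(h_0,g)$ is $\cN(\mu_t,g)$, with the same mean $\mu_t$. The argument therefore reduces to the bound already sketched in Subsection \ref{subsec:comb}. Since total variation can only increase under conditioning on $\cP$, Theorem \ref{thm:gauss} gives
\[
\sup_{h_0}\|\delta_{h_0}\scrP_t-\cN(h_0,g)\scrP_t\|_{TV}\le C\,\e\big[\|\lapl^{1/2}\Sigma_t\lapl^{1/2}-I_V\|_F\big]=C\,\e\big[\|\lapl^{1/2} g_t\lapl^{1/2}\|_F\big].
\]
Here $g_t$ is the random matrix from Proposition \ref{prop:repr}; it does not depend on $h_0$, so the supremum is free. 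The task is then to bound the expected Frobenius norm of $\lapl^{1/2}g_t\lapl^{1/2}$, working in $\ell^2(\pi)$ where $\lapl$ is symmetric.

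By Jensen it suffices to bound $\e[\tr((\lapl g_t)^2)]^{1/2}$. Write $g_t=g-\Sigma_t$. We have $0\preceq g_t\preceq g$, because $\Sigma_t$ increases monotonically to $g$. Hence $M:=\lapl^{1/2}g_t\lapl^{1/2}$ satisfies $0\preceq M\preceq I$, and therefore $\|M\|_F^2=\tr(M^2)\le \tr(M)=\tr(\lapl g_t)$. Taking expectations and using $\e[\Sigma_t]=g-e^{-2t\lapl}g$ from Proposition \ref{prop:repr}, we get $\e[\tr(\lapl g_t)]=\tr(e^{-2t\lapl})=\sum_x e^{-2t\lambda_x}$, where $(\lambda_x)$ is the spectrum of $\lapl$.

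This linear trace bound is the cheap route. It gives $\e\|M\|_F\le \big(\sum_x e^{-2t\lambda_x}\big)^{1/2}\le \sqrt{|V|}e^{-t\lambda}$. With the stated $t$ this is $\sqrt{|V|/\max(|V|,\pi(V))}\,e^{-x/2}$, which decays in $x$ but falls short of the claimed bound by the factor $(\log(1/\lambda)+1)/\sqrt{\lambda\max(|V|,\pi(V))}$. The improvement has to come from the fact that $\tr(M^2)$ can be much smaller than $\tr(M)$. So I would compute $\e[\tr((\lapl g_t)^2)]$ directly, using the backward walk representation. Expanding the square writes the trace as an expectation of $g$-values evaluated at four killed walks $X^x,\tilde X^y,X'^y,\tilde X'^x$ that share the same clocks. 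Averaged over $\cP$, these walks are no longer independent: walks located at the same site jump at the same times. I would bound this correlation by comparing with the independent case, $\tr(e^{-4t\lapl})$, plus a correction from the time the walks spend together.

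The main obstacle is this coalescence or shared-clock correction. To control it I would use Green's function estimates: the expected time two walks spend at common sites, weighted by $g$, should produce the $\log(1/\lambda)$ factor (a sum $\sum_k e^{-\lambda k}/k$ type bound over the relevant time scales). The spectral-gap decay $e^{-2\lambda t}\le e^{-x}/\max(|V|,\pi(V))$ then supplies the $e^{-x}$ and the remaining normalisation. The last step is to insert the chosen $t$, giving $e^{-2\lambda t}=e^{-x}/\max(|V|,\pi(V))$, and to collect the constants into the universal $C$.
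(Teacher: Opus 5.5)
\paragraph{Verdict.} The proposal has a genuine gap. Your reduction to $C\,\e\big[\|\lapl^{1/2}g_t\lapl^{1/2}\|_F\big]$ matches the paper. Your trace bound $\tr(M^2)\le\tr(M)$, and your diagnosis that it only gives $e^{-x/2}\sqrt{|V|/\max(|V|,\pi(V))}$, are both correct. Everything after that is a plan, not an argument.

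The four-walk second-moment computation, with walks sharing the same clocks $\cP$, runs straight into the correlation structure that the paper deliberately avoids. You give no mechanism for bounding the ``coalescence correction'' on an arbitrary graph. The claim that this correction produces a $\log(1/\lambda)$ factor is unsupported, and in the paper that factor has an unrelated origin. As written, nothing in the proposal beats $e^{-x/2}\sqrt{|V|/\max(|V|,\pi(V))}$. That is not enough, for instance, when $r_n=|\Lambda_n|$ in Theorem~\ref{thm:1}.

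\paragraph{The missing idea: monotonicity.} Since $\lapl g=I$, each column $u\mapsto g(u,y)$ is super-harmonic. So is $u\mapsto P^kg(u,y)$, because $\lapl P^kg=P^k\ge0$ entrywise. Write $g_t(x,y)=\pi(y)\sum_u P_t^{\cP}(x,u)\,\e[g(u,\tilde X^y_t)\mid\cP]$ and apply Proposition~\ref{prop:bound} twice, once to each conditionally independent walk. This gives the deterministic sandwich
\[
\pi(y)P^{2\FLOOR{t+s}+2}g(x,y)-2\pi(y)\varepsilon\;\le\; g_t(x,y)\;\le\;\pi(y)P^{2\FLOOR{t-s}}g(x,y)+2\pi(y)\varepsilon .
\]
It holds with probability at least $1-4|V|\|g\|_\infty\varepsilon^{-1}e^{-s^2/4t}$. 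Take $N=\max(|V|,\pi(V))$, $\varepsilon=N^{-2}$, $s=100\log N/\sqrt\lambda$, and use the polynomial bound on $\|g\|_\infty$. Then the sandwich fails with probability at most $N^{-10}$.

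On this event the clock randomness disappears. $\|\lapl g_t\|_F$ is then controlled by Frobenius norms of $(P^{2\FLOOR{t-s}}-P^{2\FLOOR{t+s}+2})g$ and its odd-power analogue, which are explicit spectral sums. The paper splits the spectrum at $(1+\beta)\lambda$ and takes $\beta=\log(1/\sqrt\lambda)/\log N$. The $\log(1/\lambda)$ comes from this purely spectral optimisation against the Poisson window $s$, not from any time the walks spend together.

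Your observation $0\preceq M\preceq I$ still helps on this route. It gives $\|M\|_F\le\sqrt{|V|}$ on the exceptional event, so that event contributes at most $\sqrt{|V|}\,N^{-10}$.
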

	The proof of Theorem \ref{thm:2} follows from this Lemma and Proposition \ref{prop:conc}
	\begin{proof}[Proof of Theorem \ref{thm:2}]
		Under the assumptions of the theorem, $\cE(h_0) = r \geq \max(|V|,\pi(V))$. Using Proposition \ref{prop:conc} for $t = \log(r)/2\lambda + s/\lambda$ and $\alpha$ large enough means that
		\[
		 \|\cN(h_0,g)\scrP_t-\cN(\overline\mu_t, g)\|_{TV} \to 0.
		\]
		Furthermore by Proposition \ref{prop:1}, we have
		\begin{align*}
			\|\Gamma-\cN(\overline\mu_t,g)\|_{TV} &= \erf\PAR{\frac{\sqrt{\cE(\overline{\mu}_t)}}{2\sqrt{2}}}\\
			&\leq \erf\PAR{\frac{\sqrt{e^{-2\lambda t}r}}{2\sqrt{2}}}\\
			&=\erf(e^{-s}/2\sqrt{2}).
		\end{align*}
		To conclude the first result, notice that there is an equality in the above when $h_0$ is an eigenvector of the Laplacian $\lapl$ associated with the spectral gap $\lambda$.\\
		The second result is an immediate corollary from the above, since we get under the described assumption
		\[
			\sup_{\substack{\cE(h_0) = r_n\\h_0\text{ super-harmonic}}}\|\delta_{h_0}\scrP_{t}-\cN(h_0,g)\scrP_t\|_{TV} \to0.
		\]
	\end{proof}
	\begin{proof}
		Let $P_t^\cP(x,y)$ be the transition kernel of \textit{killed random walk with clocks} $\cP$. We recall the result Subsection \ref{subsec:comb} that there exists a universal constant $C$ such that
		\[
			\|\delta_{h_0}\scrP_t-\cN(h_0,g)\scrP_t\|_{TV} \leq C\e[\|\lapl g_t\|_F],
		\]
		where
		\begin{align*}
			g_t(x,y) = g(x,y) - \Sigma_t(x,y) = \sum_{u\in V} P_t^\cP(x,u)\e[g(u, X_t^y)|\cP]\pi(y).
		\end{align*}
		Since $g\lapl = I_V$, for any fixed $x\in V$ the function $y\mapsto g(x,y)$ is super-harmonic. We can therefore apply our result on super-harmonic functions.\\
		Take $s<t$ and $\varepsilon >0$, to be fixed later. We know by Proposition \ref{prop:bound} that
		\[
		\forall u,y\in \Lambda_n,~~~ [P^{\FLOOR{t+s}+1}g](u,y) - \varepsilon \leq \e[g(u, X_t^y)|\cP] \leq [P^{\FLOOR{t-s}}g](u,y) + \varepsilon ,
		\] 
		holds with probability at least
		\[
		1-\frac{2|V|\|g\|_\infty}{\varepsilon}e^{-s^2/4t}.
		\]
		Notice that $P$ and $g$ commute (since $g= (I_V-P)^{-1}$). This means that again, for any fixed $x\in V$, the function $y\mapsto [P^{\FLOOR{t-s}}g](y,x) +\varepsilon$ is super-harmonic. By apply the Proposition \ref{prop:bound} again and following the same principle, we obtain the bound
		\begin{equation}\label{eq:boundG}
			\pi(y)P^{2\FLOOR{t+s}+2}g(x,y) - 2\pi(y)\varepsilon \leq g_t(x,y) \leq \pi(y)P^{2\FLOOR{t-s}}g(x,y) + 2\pi(y)\varepsilon,
		\end{equation}
		which holds with probability at least
		\[
		1-\frac{4|V|\|g\|_\infty}{\varepsilon}e^{-s^2/4t}.
		\]
		We can find a polynomial bound for $\|g\|_\infty$, using the same estimate as the proof of Proposition \ref{prop:conc}:
		\begin{align*}
			\|g\|_\infty&\leq \frac{1}{\sqrt{|V|\pi(V)}}\sqrt{\sum_{x,y\in V}\pi(x)g(x,y)^2}\\
			&\leq\frac{1}{\lambda\sqrt{|V|\pi(V)}}\sqrt{\sum_{x,y\in V}\pi(x)(\lapl g)(x,y)^2}\\
			&\leq2|V|^{5/2}.
		\end{align*}
		If \eqref{eq:boundG} holds, then 
		\[
			\forall x,y\in V,~~~ \ABS{\lapl g_t(x,y)} \leq \pi(y)\ABS{P^{2\FLOOR{t+s}+2}g(x,y) - P^{2\FLOOR{t-s}}g(x,y)} + \pi(y)\ABS{P^{2\FLOOR{t+s}+1}g(x,y) - P^{2\FLOOR{t-s}+1}g(x,y)}+ 4\pi(y)\varepsilon.
		\]
		In the following we set $N=\max(|V|,\pi(V))$. Fix now $\varepsilon=1/N^2$ and $s=100\log N/\sqrt{\lambda}$ such that \eqref{eq:boundG} holds with probability at least $1-N^{-10}$. On that event we then get
		\[
			\|\lapl g_t\|_F \leq \|(P^{2\FLOOR{t-s}}-P^{2\FLOOR{t+s}+2})g\pi^T\|_F + \|(P^{2\FLOOR{t-s}+1}-P^{2\FLOOR{t+s}+1})g\pi^T\|_F + 4\varepsilon\sqrt{|V|\pi(V)}.
		\]
		Using the same principle as in the proof of Proposition \ref{prop:conc}, we can write these Frobenius norms as sum of eigenvalues. For an arbitrary $\beta>0$ we separate these eigenvalues depending on if they are larger than $(1+\beta)\lambda$ or not
		\begin{align*}
			\|(P^{2\FLOOR{t-s}}-P^{2\FLOOR{t+s}+2})g\pi^T\|_F^2 &= \sum_{\mu\in{\rm Sp}\lapl}\SBRA{\frac{(1-\mu)^{2(t-s)}}{\mu}\PAR{1-(1-\mu)^{4s+2}}}^2\\
			&\leq |V|\SBRA{ \frac{e^{-2\lambda(t-s)}}{\lambda}\PAR{(1+\beta)\lambda(4s+2)+e^{-2\beta\lambda(t-s)}}}^2\\
			&\leq \frac{Ce^{-2x}}{\lambda^2N}\PAR{\beta\sqrt{\lambda}\log N+N^{-\beta}}^2,
		\end{align*}
		where $C$ is a universal constant. We take $\beta=\log(1/\sqrt{\lambda})/\log(N)$ to minimize the expression above and finally get
		\[
			\|(P^{2\FLOOR{t-s}}-P^{2\FLOOR{t+s}+2})g\pi^T\|_F^2 \leq \frac{Ce^{-4x}(\log(1/\lambda)+1)^2}{\lambda N}.
		\]
		We can conclude using the same estimate for $\|(P^{2\FLOOR{t-s}+1}-P^{2\FLOOR{t+s}+1})g\pi^T\|_F$.
	\end{proof}
	
	\bibliographystyle{plain}
	\bibliography{DGFFcutoff}
	
\end{document}